\def\namedlabel#1#2{\begingroup
 #2%
 \def\@currentlabel{#2}%
 \phantomsection\label{#1}\endgroup
}
\newcommand\C{\mathbb{C}}
\newcommand\Z{\mathbb{Z}}
\newcommand\Q{\mathbb{Q}}
\newcommand\N{\mathbb{N}}
\newcommand\tr{\mathrm{tr}}
\DeclareMathOperator{\Hom}{Hom}
\DeclareMathOperator{\rad}{rad}
\DeclareMathOperator{\trace}{tr}
\DeclareMathOperator{\Aut}{Aut}
\DeclareMathOperator{\soc}{soc}
\newtheorem{theo}{Theorem}[section]
\newtheorem{prop}[theo]{Proposition}
\newtheorem{lem}[theo]{Lemma}
\newtheorem{cor}[theo]{Corollary}
\theoremstyle{definition}
\newtheorem{defin}[theo]{Definition}
\newtheorem{rem}[theo]{Remark}
\newtheorem{eg}[theo]{Example}
\numberwithin{equation}{section}
 \newcommand{\comments}[1]{
 \begin{center}
 \parbox{6.5 in}{
 \color{red}
 {\footnotesize \textbf{Comments:} #1}
 \color{black}}
 \end{center}}
 \newcommand{\comments}[1]{}
 \newcommand{\details}[1]{
 \ \\
 \color{olivegreen}
 {\footnotesize \textbf{Details:} #1}
 \color{black}
 \\
 }
 \newcommand{\details}[1]{}
\begin{document}

%\allowdisplaybreaks

\newcommand{\arXivNumber}{1509.08405}

\renewcommand{\PaperNumber}{062}

\FirstPageHeading

\tikzset{->-/.style={decoration={
 markings,
 mark=at position .5 with {\arrow{>}}},postaction={decorate}}}

\ShortArticleName{Skew-Zigzag Algebras}

\ArticleName{Skew-Zigzag Algebras}

\Author{Chad COUTURE}
\AuthorNameForHeading{C.~Couture}
\Address{Department of Mathematics and Statistics, University of Ottawa,\\ 585 King Edward Ave, Ottawa, ON K1N 6N5, Canada}
\Email{\href{mailto:ccout045@uottawa.ca}{ccout045@uottawa.ca}}

\ArticleDates{Received October 02, 2015, in f\/inal form June 17, 2016; Published online June 26, 2016}

\Abstract{We investigate the skew-zigzag algebras introduced by Huerfano and Khovanov. In particular, we relate moduli spaces of such algebras with the cohomology of the corresponding graph.}

\Keywords{zigzag algebra; path algebra; Dynkin diagram; moduli space; graph cohomology}

\Classification{16G20}

\section{Introduction}

Zigzag algebras were introduced by Huerfano and Khovanov in \cite{MR1872113} in their categorif\/ication of the adjoint representation of simply-laced quantum groups. The zigzag algebra $A(\Gamma)$ is a quotient of the path algebra of the double quiver associated to the Dynkin diagram $\Gamma$ of the quantum group in question. The Grothendieck group of the category of $A(\Gamma)$-modules is then naturally identif\/ied with the weight lattice of $\mathfrak{g}$. Zigzag algebras have also recently appeared in categorif\/ications of the Heisenberg algebra. See \cite{MR2988902} and \cite[Remark~6.2(c)]{RS15}.

In addition to their importance in categorif\/ication, zigzag algebras have a variety of nice features, as pointed out in~\cite{MR1872113}. Examples of such features include the following: they have nondegenerate symmetric trace forms and are quadratic algebras (provided that~$\Gamma$ has at least~3 vertices); if $\Gamma$ is a f\/inite Dynkin diagram, then $A(\Gamma)$ is of f\/inite type and its indecomposable representations are enumerated by roots of $\mathfrak{g}$; and if~$\Gamma$ is bipartite, then the quadratic dual of the $A(\Gamma)$ is the preprojective algebra of $\Gamma$ for a sink-source orientation.

\looseness=-1
\emph{Skew}-zigzag algebras, also introduced in \cite{MR1872113}, are a generalisation of zigzag algebras. They involve coef\/f\/icients $v_{b,c}^a$ for vertices $a$, $b$, $c$ such that~$a$ is connected to both $b$ and $c$. When all coef\/f\/icients are equal to one, they recover the zigzag algebras. The goal of the current paper is to investigate some important properties of skew-zigzag algebras. We provide proofs of some results mentioned in~\cite{MR1872113} without proof, in addition to proving some results that appear to be new.

We begin with a review of the concepts in graph theory necessary for the current paper in Section~\ref{sec:graph-theory}. In Section~\ref{sec:zigzag-algebra}, we recall the def\/inition of the skew-zigzag algebras, f\/ind an explicit basis for them (see Proposition~\ref{basis}), and prove that they are graded symmetric algebras (see Proposition~\ref{symmetric algebra}). In Section~\ref{sec:moduli}, we describe two moduli spaces of skew-zigzag algebras. The f\/irst is the moduli space of skew-zigzag algebras up to isomorphism preserving vertices, which we show to be isomorphic to the graph cohomology of~$\Gamma$ (see Theorem~\ref{thm 1}), as stated in \cite{MR1872113} without proof (see Remark~\ref{rem:HK-moduli-space-statement}). The second is the moduli space of skew-zigzag algebras up to \emph{arbitrary} isomorphism, which we show to be isomorphic to the quotient of the graph cohomology of~$\Gamma$ by a~natural action of the automorphism group of~$\Gamma$ (see Theorem~\ref{theo:moduli-arbitrary-isom}). We should note here that we consider the moduli space only as a group, and do not consider any geometric structure. Finally, in Section~\ref{sec:literature}, we discuss an alternate def\/inition of skew-zigzag algebras that has appeared in the literature. We show that this alternate def\/inition is more restrictive (see Proposition~\ref{not iso}).

\section{Graph theory background} \label{sec:graph-theory}

Recall that a \emph{graph} $\Gamma$ is a pair $(V,E)$ where $V$ is a f\/inite set and $E$ is a set consisting of two element subsets of~$V$. The elements of $V$ are called \emph{vertices} and the elements of~$E$ are called \emph{edges}. Note that this implies that we consider graphs with no loops or multiple edges. We will often depict graphs as diagrams, with a node for each vertex and curves between nodes~$a$ and~$b$ if~$\{a,b\} \in E$. If there exist subsets $A,B \subseteq V$ such that $A \sqcup B=V$ and each element of~$E$ contains one element of $A$ and one element $B$, then $\Gamma$ is said to be \emph{bipartite}.

A \emph{quiver}, $\mathcal{Q}$, is a four-tuple $(\mathcal{Q}_{0},\mathcal{Q}_{1},s,t)$ where $\mathcal{Q}_{0}$ and $\mathcal{Q}_{1}$ are both f\/inite sets and~$s$,~$t$ are set maps from $\mathcal{Q}_{1}$ to $\mathcal{Q}_{0}$. The elements of~$\mathcal{Q}_{0}$ are again called vertices and the elements of~$\mathcal{Q}_{1}$ are called \emph{directed edges}. For each directed edge~$f$, we call~$s(f)$ and $t(f)$ the \emph{source} and \emph{target} of~$f$ (respectively). We will often denote a directed edge with source $a$ and target~$b$ by~$(a \,|\, b)$. Throughout this paper we will consider quivers with no parallel edges, i.e., no directed edges~$f_1$,~$f_2$ such that $s(f_1)=s(f_2)$ and $t(f_1)=t(f_2)$.

\begin{eg}[a graph and a quiver] \label{eg:1}
 The leftmost diagram below represents the graph $(V,E)$ with $V=\{a,b,c,d\}$ and $E=\{\{a,b\},\{b,c\},\{a,c\},\{a,d\}\}$. The rightmost diagram depicts the quiver $\mathcal Q=(\mathcal{Q}_{0},\mathcal{Q}_{1},s,t)$ with $\mathcal{Q}_{0}=\{a,b,c,d\}$, $\mathcal{Q}_{1}=\{(a \,|\, b), (a \,|\, d), (b \,|\, c), (c \,|\, d), (d \,|\, b)\}$ and $s,t \colon \mathcal{Q}_{1} \to \mathcal{Q}_{0}$ given by
 \begin{gather*}
  s((a \,|\, b))=s((a \,|\, d))=a,\qquad s((b \,|\, c))=b,\qquad s((c \,|\, d))=c,\qquad s((d \,|\, b))=d,  \\
  t((a \,|\, b))=t((d \,|\, b))=b, \qquad t((a \,|\, d))=t((c \,|\, d))=d,\qquad t((b \,|\, c))=c.
 \end{gather*}
 \begin{center}
 \begin{tikzpicture}
 \coordinate (x) at (0,0);
 \coordinate (y) at (2,0);
 \coordinate (z) at (2,2);
 \coordinate (w) at (0,2);

 \filldraw[black] (0,0) circle (2pt);
 \filldraw[black] (2,0) circle (2pt);
 \filldraw[black] (2,2) circle (2pt);
 \filldraw[black] (0,2) circle (2pt);

 \node at (-0.25,0) {$d$};
 \node at (-0.25,2) {$a$};
 \node at (2.25,2) {$b$};
 \node at (2.25,0) {$c$};

 \draw [-] (w) -- (x);
 \draw [-] (w) -- (y);
 \draw [-] (w) -- (z);
 \draw [-] (z) -- (y);
 \end{tikzpicture}
 \qquad \qquad
 \begin{tikzpicture}
 \coordinate (x) at (0,0);
 \coordinate (y) at (2,0);
 \coordinate (z) at (2,2);
 \coordinate (w) at (0,2);

 \filldraw[black] (0,0) circle (2pt);
 \filldraw[black] (2,0) circle (2pt);
 \filldraw[black] (2,2) circle (2pt);
 \filldraw[black] (0,2) circle (2pt);

 \node at (-0.25,0) {$d$};
 \node at (-0.25,2) {$a$};
 \node at (2.25,2) {$b$};
 \node at (2.25,0) {$c$};

 \draw [->-] (w) -- (z);
 \draw [->-] (w) -- (x);
 \draw [->-] (z) -- (y);
 \draw [->-] (y) -- (x);
 \draw [->-] (x) -- (z);
 \end{tikzpicture}
 \end{center}
\end{eg}

We def\/ine a \emph{path} in a graph $\Gamma$ to be a sequence of vertices $(a_1,a_2,\dots,a_n)$ such that $\{a_i,a_{i+1}\} \in E$ for $i=1, \dots ,n-1$. Analogously, we def\/ine a~path,~$P$, in $\mathcal Q$ to be a sequence of directed edges $(f_1,\dots, f_n)$ such that the source of $f_{i+1}$ is equal to the target of $f_i$ for $1\leq i\leq n-1$. The source and the target of $P$ are the source of $f_1$ and the target of~$f_n$ (respectively). Furthermore, for any path $P=(a_1,a_2,\dots ,a_n)$ (respectively $P=(f_1,\dots , f_m)$), the \emph{length}, $\ell(P)$, of $P$ is equal to $n-1$ (respectively~$m$). We also consider paths of length~0 which start and end at the same vertex~$a$, called \emph{empty paths} or \emph{trivial paths} and denoted $(a)$. We shall use $\mathcal Q_i$ to denote the paths of length $i$ in $\mathcal Q$. In addition, we shall use $(a_1 \,|\, a_2 \,|\, \dots \,|\, a_n)$ $(n \geq 1)$ to denote a~path $(f_1,\dots , f_{n-1})$ such that $s(f_i)=a_i$ and $t(f_i)=a_{i+1}$ for $1\leq i \leq n-1$ in $\mathcal Q$. We say that $\Gamma$ is \emph{connected} if for any vertices $a$ and $b$, there exits a path $P$ between~$a$ and~$b$.

\begin{eg}[paths] \label{eg:paths}
 In the graph of Example~\ref{eg:1}, $(a,b,c,a,d)$ is a path from $a$ to $d$ of length~4. In the quiver of Example~\ref{eg:1}, $(a \,|\, b \,|\, c \,|\, d)$ is a path of length~3. However, $(a \,|\, b \,|\, d)$ is not a path in the quiver of Example~\ref{eg:1}.
\end{eg}

We def\/ine the \emph{double graph} of $\Gamma$, denoted $D\Gamma$, to be the quiver consisting of the vertices of $\Gamma$ and for each edge $\{a,b\}$ of $\Gamma$, $D\Gamma$ has two edges $f_1$ and $f_2$ with $s(f_1)=t(f_2)=a$ and $s(f_2)=t(f_1)=b$.

\begin{eg} [double graph] \label{eg:3}
 Let $\Gamma$ be the graph in Example~\ref{eg:1}. Its double graph, $D\Gamma$, is the following quiver.
 \begin{center}
 \begin{tikzpicture}
 \coordinate (x) at (0,0);
 \coordinate (y) at (2,0);
 \coordinate (z) at (2,2);
 \coordinate (w) at (0,2);

 \draw [->-] (w) to [out=15,in=165] (z);
 \draw [->-] (z) to [out=195,in=-15] (w);
 \draw [->-] (w) to [out=330,in=120] (y);
 \draw [->-] (y) to [out=150,in=300] (w);
 \draw [->-] (w) to [out=285,in=75] (x);
 \draw [->-] (x) to [out=105,in=255] (w);
 \draw [->-] (z) to [out=285,in=75] (y);
 \draw [->-] (y) to [out=105,in=255] (z);

 \filldraw[black] (0,0) circle (2pt);
 \filldraw[black] (2,0) circle (2pt);
 \filldraw[black] (2,2) circle (2pt);
 \filldraw[black] (0,2) circle (2pt);

 \node at (-0.25,0) {$d$};
 \node at (-0.25,2) {$a$};
 \node at (2.25,2) {$b$};
 \node at (2.25,0) {$c$};
 \end{tikzpicture}
 \end{center}
\end{eg}

A path $C=(a_1,a_2,\dots ,a_n)$ in $\Gamma$ is said to be a \emph{cycle} if $a_1 = a_n$. Then we def\/ine $V_C$ and~$E_C$ to be the sets $\{ a_i \,|\, 1 \leq i \leq n-1\}$ and $\{ \{a_i,a_{i+1}\} \,|\, 1 \leq i \leq n-1\}$ respectively. Similarly, a~path $C=(a_1 \,|\, a_2 \,|\, \dots \,|\, a_n)$ in $\mathcal Q$ is said to be a cycle if $a_1 = a_n$. Then we def\/ine $V_C$ and $E_C$ to be the sets $\{ a_i \,|\, 1 \leq i \leq n-1\}$ and $\{ \{a_i,a_{i+1}\} \,|\, 1 \leq i \leq n-1\}$ respectively. For each vertex $a \in V$, we def\/ine the degree of $a$, denoted $\deg (a)$, to be the cardinality of the set $\{e \in E \,|\, a \in e \}$. Furthermore, for any cycle $C$ (in a quiver or a graph), we def\/ine the degree in $C$ of a vertex $a \in V_C$, denoted $\deg _{C} (a)$, to be the cardinality of the set $\{e \in E_{C} \,|\, a \in e \}$. Finally, we say that~$C$ is a \emph{simple cycle} if for all $a \in V_{C}$ we have $\deg _{C} (a)=2$.

We say that a graph $\Gamma'=(V',E')$ is a \emph{subgraph} of $\Gamma=(V,E)$ if $V'\subseteq V$ and~$E'\subseteq E$. If $T$ is a connected graph that does not contain any simple cycles, then $T$ is said to be a \emph{tree}. Let $T=(V_T,E_T)$ be a subgraph of $\Gamma$. If $V_T=V$ and if $T$ is a tree, then $T$ is called a \emph{spanning tree} of~$\Gamma$. Every connected graph has at least one spanning tree (see, for instance, \cite[Section~1.5]{MR2744811}).
If $\Gamma=(V,E)$ is a connected graph and $T=(V_T,E_T)$ is a spanning tree of $\Gamma$, then $\left\vert{E_T}\right\vert=|V|-1$.

\section{Skew-zigzag algebras} \label{sec:zigzag-algebra}

In this section we introduce our main objects of study, the skew-zigzag algebras, and prove some basic facts about them that are stated without proof in the literature. Throughout this section, we f\/ix a f\/ield~$\Bbbk$ and a connected graph $\Gamma = (V,E)$.

\subsection{Def\/initions}

We f\/irst recall the def\/inition of the path algebra of a quiver. We refer the reader to Chapter~2 of~\cite{MR2197389} for further details. The \emph{path algebra} of a quiver $\mathcal Q$, denoted $\Bbbk \mathcal Q$, is the vector space with basis consisting of all paths. We def\/ine the \emph{concatenation} of two paths $(a_1 \,|\, a_2 \,|\, \dots \,|\, a_n)$ and $(a_1' \,|\, a_2' \,|\, \dots \,|\, a_m')$ to be the path $(a_1 \,|\, a_2 \,|\, \dots \,|\, a_n=a_1' \,|\, a_2' \,|\, \dots \,|\, a_m')$ when $a_{n}=a_1'$ and 0 otherwise. Then, we def\/ine the multiplication of two paths $P_1 * P_2$ to be the concatenation of paths. The path algebra is an $\mathbb{N}$-graded algebra, i.e.,
 \begin{gather*}
 \Bbbk\mathcal Q=\bigoplus_{i=0}^{\infty} \Bbbk \mathcal Q_i,
 \end{gather*}
where each $\Bbbk \mathcal Q_i$ is the $\Bbbk$-vector space spanned by all paths of length $i$, and $\Bbbk \mathcal Q_i \Bbbk \mathcal Q_j\subseteq \Bbbk \mathcal Q_{i+j}$ for all~$i$,~$j$. We call an element of $\Bbbk \mathcal Q$ \emph{homogeneous of degree $i$} if it lies in $\Bbbk \mathcal Q_i$. We will be mostly interested in the case where $\mathcal Q = D\Gamma$.

Recall that an ideal $I$ of a graded algebra $A=\bigoplus_{i=0}^{\infty} A_i$ is called \emph{graded} or \emph{homogeneous} if it is generated by homogeneous elements of $A$. Equivalently, $I$ is homogeneous if $I=\bigoplus_{i=0}^{\infty} (I\cap A_i)$.

\begin{defin}[skew-zigzag coef\/f\/icients] \label{coeff defin}
 A set $v = \big(v^a_{b,c} \in \Bbbk \,|\, \{a,b\}, \{a,c\} \in E \big)$ is a collection of \emph{skew-zigzag coefficients} for the graph $\Gamma = (V,E)$ if it satisf\/ies the following three conditions:
 \begin{itemize}\itemsep=0pt
 \item $v^a_{b,b}$=1 for all $\{a,b\} \in E$,
 \item $v^a_{b,c}v^a_{c,b}=1$ for all $\{a,b\},\{a,c\} \in E$, and
 \item $v^a_{b,c}v^a_{c,d}v^a_{d,b}=1$ for all $\{a,b\},\{a,c\},\{a,d\} \in E$.
 \end{itemize}
\end{defin}

From now on, whenever we say that two vertices are \emph{connected}, we shall mean that there is an edge between these two vertices. We are now ready to recall the def\/inition of the algebras def\/ined originally in \cite[p.~527]{MR1872113}.

\begin{defin} [skew-zigzag algebra] \label{skew}
 Let $v$ be a collection of skew-zigzag coef\/f\/icients.
 \begin{itemize}\itemsep=0pt
 \item If $\Gamma$ only contains one vertex, then we def\/ine $A_v(\Gamma$) to be the algebra generated by~1 and~$X$ with $X^2$=0.
 \item If $\Gamma$ contains two vertices, then we def\/ine $A_v(\Gamma)$ to be the quotient algebra of the path algebra of $D\Gamma$ by the two-sided ideal generated by all paths of length greater than two.
 \item If $\Gamma$ has at least three vertices, we def\/ine~$I_v$ to be the ideal generated by
 \begin{enumerate}\itemsep=0pt
 \item[a)] paths of the form $(a_1 \,|\, a_2 \,|\, a_3)$ for all $a_1$, $a_2$, $a_3$ in $\Gamma$ such that~$a_1$ is connected to~$a_2$,~$a_2$ is connected to $a_3$ and $a_1 \neq a_3$,
 \item[b)] elements of the form $(a_1 \,|\, a_2 \,|\, a_1)-v_{a_2,a_3}^{a_1}(a_1 \,|\, a_3 \,|\, a_1)$ for all $a_1 , a_2 , a_3 \in V$ such that~$a_1$ is connected to~$a_2$, $a_3$.
 \end{enumerate}
 We then def\/ine $A_v(\Gamma)$ to be the quotient algebra of the path algebra of $D\Gamma$ by the ideal~$I_v$.
 \end{itemize}
 We call $A_v(\Gamma)$ the \emph{skew-zigzag algebra} of~$\Gamma$. When $v_{a_2,a_3}^{a_1}=1$ for all vertices $a_1$, $a_2$, $a_3$ such that~$a_1$ is connected to both $a_2$ and $a_3$, then we call~$A_v(\Gamma)$ the \emph{zigzag} algebra of $\Gamma$ and denote it~$A(\Gamma)$.
\end{defin}

\begin{rem} \label{grading}
 Notice that $I_v$ is generated by homogeneous elements of $\Bbbk\Gamma$ and hence is a graded ideal. Therefore, the skew-zigzag algebra inherits a grading of its own. More precisely, we have
 \begin{gather*}
 A_v(\Gamma)=\bigoplus_{i=0}^\infty \Bbbk \mathcal Q_i/(I_v\cap \Bbbk \mathcal Q_i)=\bigoplus_{i=0}^\infty (\Bbbk \mathcal Q_i+I_v)/I_v.
 \end{gather*}
 Moreover, if $\Gamma$ contains at least two vertices, $I_v$ is generated by elements of $\Bbbk \mathcal Q_2$. Thus $I_v$ is contained in $\bigoplus_{i \geq 2} \Bbbk \mathcal Q_i$. Consequently, any path of length less than two cannot sit inside $I_v$.
\end{rem}

For a path $P$ in $D\Gamma$, we let $[P]$ denote the equivalence class of $P$ in $A_v(\Gamma)$. Similarly, we shall use the notation $[a_1 \,|\, a_2 \,|\, \dots \,|\, a_n]$ to denote the equivalence class of $(a_1 \,|\, a_2 \,|\, \dots \,|\, a_n)$, $n \geq 1$, in~$A_v(\Gamma)$.

\subsection{Bases}

Our next goal is to describe an explicit basis of the skew-zigzag algebra. We begin with a few technical results. Throughout this subsection, we f\/ix a~collection $v$ of skew-zigzag coef\/f\/icients for the connected graph $\Gamma = (V,E)$.

\begin{lem} \label{coeff}
 Let $a, a_1, \ldots, a_n \in V$ with $n\geq2$, and suppose that~$a$ is connected to $a_1, \ldots, a_n$. Then
 \begin{gather} \label{multiplication}
 v^a_{a_1,a_2}v^a_{a_2,a_3}\cdots v^a_{a_{n-1},a_n}=v^a_{a_1,a_n}.
 \end{gather}
\end{lem}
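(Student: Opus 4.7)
The plan is to prove the identity by induction on $n$, after first establishing the key base case $n=3$ which is essentially a rewriting of the third axiom in Definition~\ref{coeff defin} using the second axiom.

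First I would handle $n=3$. The third condition gives $v^a_{a_1,a_2}v^a_{a_2,a_3}v^a_{a_3,a_1}=1$, and the second condition gives $v^a_{a_3,a_1}v^a_{a_1,a_3}=1$, so that $v^a_{a_3,a_1}=(v^a_{a_1,a_3})^{-1}$ (note all the coefficients are nonzero, since they have inverses by the second condition). Substituting yields
\[
v^a_{a_1,a_2}\,v^a_{a_2,a_3}=v^a_{a_1,a_3},
\]
which is exactly \eqref{multiplication} for $n=3$. The $n=2$ case of \eqref{multiplication} is a tautology.

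For the inductive step, assume the identity for some $n\geq 3$ and consider $a_1,\dots,a_{n+1}$ all connected to $a$. Apply the induction hypothesis to the first $n-1$ factors to rewrite
\[
v^a_{a_1,a_2}v^a_{a_2,a_3}\cdots v^a_{a_{n-1},a_n}v^a_{a_n,a_{n+1}}=v^a_{a_1,a_n}\,v^a_{a_n,a_{n+1}},
\]
and then apply the $n=3$ case (with the triple $a_1, a_n, a_{n+1}$, all of which are connected to $a$) to conclude that this equals $v^a_{a_1,a_{n+1}}$.

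There is no real obstacle here; the only mildly non-obvious point is the initial manipulation showing that axiom~(iii) together with axiom~(ii) can be rephrased as a ``multiplicativity in the second index'' property, after which the statement becomes a one-line induction.
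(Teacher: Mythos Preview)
Your proof is correct and follows essentially the same inductive strategy as the paper. The only cosmetic difference is that you isolate the $n=3$ identity $v^a_{a_1,a_2}v^a_{a_2,a_3}=v^a_{a_1,a_3}$ as a separate preliminary step and then invoke it in the induction, whereas the paper starts from $n=2$ and performs the equivalent axiom-(ii)/axiom-(iii) manipulation inside the inductive step (multiplying by $v^a_{a_{n+1},a_{n-1}}v^a_{a_{n-1},a_{n+1}}=1$ and collapsing three factors via the cocycle relation).
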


\begin{proof}
 We shall proceed by induction on $n$. For $n=2$, this trivially holds. Now suppose that~\eqref{multiplication} holds for some integer $n\geq 2$. Let us prove that it holds for $n+1$. We have
 \begin{gather*}
 \prod_{i=1}^nv^a_{a_i,a_{i+1}}=\left(\prod_{i=1}^nv^a_{a_i,a_{i+1}}\right)v^a_{a_{n+1},a_{n-1}}v^a_{a_{n-1},a_{n+1}}
 =\left(\prod_{i=1}^{n-2}v^a_{a_i,a_{i+1}}\right)v^a_{a_{n-1},a_{n+1}}=v^a_{a_1,a_{n+1}},
 \end{gather*}
 where the last equality follows from the induction hypothesis.
\end{proof}

\begin{cor} For all $n\geq 2$, we have
 \begin{gather*}
 v^a_{a_1,a_2}v^a_{a_2,a_3}\cdots v^a_{a_{n-1},a_n}v^a_{a_n,a_1}=1.
 \end{gather*}
\end{cor}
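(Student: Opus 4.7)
The plan is to derive this as an immediate consequence of the preceding lemma together with one of the defining axioms for skew-zigzag coefficients, so almost no work is required.

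First, I would apply Lemma~\ref{coeff} to the first $n-1$ factors of the product. That lemma collapses
\begin{gather*}
v^a_{a_1,a_2}v^a_{a_2,a_3}\cdots v^a_{a_{n-1},a_n} = v^a_{a_1,a_n}.
\end{gather*}
The hypothesis of Lemma~\ref{coeff} is met because the fact that $v^a_{a_n,a_1}$ appears in the statement of the corollary implicitly requires $\{a,a_1\},\{a,a_n\} \in E$, and the existence of the intermediate factors requires the connectedness of $a$ to each $a_i$.

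Next, multiplying both sides by $v^a_{a_n,a_1}$, the left-hand side of the corollary becomes $v^a_{a_1,a_n}\, v^a_{a_n,a_1}$, which equals $1$ by the second axiom of Definition~\ref{coeff defin} (namely $v^a_{b,c}v^a_{c,b}=1$ whenever $\{a,b\},\{a,c\}\in E$). There is no real obstacle here; the entire argument is a one-line reduction to Lemma~\ref{coeff} followed by the pairing axiom, and no induction or case analysis beyond what is already packaged in the lemma is needed.
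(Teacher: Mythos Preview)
Your proposal is correct and matches the paper's proof essentially verbatim: the paper applies Lemma~\ref{coeff} to collapse the first $n-1$ factors to $v^a_{a_1,a_n}$ and then invokes the axiom $v^a_{b,c}v^a_{c,b}=1$ to conclude.
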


\begin{proof}Suppose $n\geq 2$. By Lemma \ref{coeff}, we have
\begin{gather*}
 v^a_{a_1,a_2}v^a_{a_2,a_3}\cdots v^a_{a_{n-1},a_n}v^a_{a_n,a_1}=v^a_{a_1,a_n}v^a_{a_n,a_1}=1. \tag*{\qed}
\end{gather*}
\renewcommand{\qed}{}
\end{proof}

\begin{lem} \label{lin. ind.}
Let $P_1,\dots,P_n$ be paths in a quiver, no two of which have the same source and target. If $[P_i] \neq 0$ for all $i=0, \dots , n$ then $[P_1],\dots,[P_n]$ are linearly independent.
\end{lem}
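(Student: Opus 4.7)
The plan is to exploit the idempotent structure of the path algebra, which descends cleanly to the quotient $A_v(\Gamma)$. Recall that for each vertex $a$, the trivial path $(a)$ is an idempotent, and for any path $Q$ in $D\Gamma$ we have $(a) * Q = Q$ if $s(Q) = a$ and $(a) * Q = 0$ otherwise, with an analogous statement on the right using $t(Q)$. Since $I_v$ is homogeneous and, by Remark~\ref{grading}, contains no paths of length less than two, the trivial paths $[a]$ remain nonzero orthogonal idempotents in $A_v(\Gamma)$, and moreover $[a] \cdot [Q] = [Q]$ if $s(Q) = a$ and $0$ otherwise (similarly on the right with $t(Q)$).

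Given a hypothetical linear dependence $\sum_{i=1}^n c_i [P_i] = 0$ in $A_v(\Gamma)$, I would fix an index $j$ and multiply the relation on the left by $[s(P_j)]$ and on the right by $[t(P_j)]$. For each $i$, the product $[s(P_j)] \cdot [P_i] \cdot [t(P_j)]$ equals $[P_i]$ precisely when $s(P_i) = s(P_j)$ and $t(P_i) = t(P_j)$, and equals $0$ otherwise. By hypothesis, no two of the $P_i$ share both source and target, so the only surviving term corresponds to $i = j$, yielding $c_j [P_j] = 0$. Since $[P_j] \neq 0$, we conclude $c_j = 0$; as $j$ was arbitrary, this gives linear independence.

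The only real work is verifying that the idempotent action on path classes behaves as claimed in the quotient, which reduces to checking that $I_v$ is stable under left and right multiplication by the trivial paths. This follows immediately from the fact that $I_v$ is a two-sided ideal together with the observation that the generators of $I_v$ are themselves linear combinations of paths sharing a common source and a common target (in the case of the relation $(a_1|a_2|a_1) - v^{a_1}_{a_2,a_3}(a_1|a_3|a_1)$, both terms have source and target $a_1$). I do not anticipate any genuine obstacle; the argument is essentially formal once the idempotent behavior in $A_v(\Gamma)$ is recorded.
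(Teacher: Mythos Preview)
Your proposal is correct and follows essentially the same approach as the paper's own proof: both arguments multiply a hypothetical dependence relation on the left by $[s(P_j)]$ and on the right by $[t(P_j)]$ to isolate the term $c_j[P_j]$, then conclude $c_j = 0$ from $[P_j] \neq 0$. Your additional remarks justifying that the idempotent action behaves as expected in the quotient are more explicit than what the paper records, but the core idea is identical.
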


\begin{proof}
 Suppose there exist $\alpha_1,\dots, \alpha_n \in \Bbbk$ such that
 \begin{gather*}
 \alpha_1[P_1]+\cdots+\alpha_n[P_n]=0.
 \end{gather*}
 Then, for all $i=1,\dots,n$, we obtain
 \begin{gather*}
 0=\alpha_1[s(P_i)][P_1][t(P_i)]+\cdots+\alpha_i[s(P_i)][P_i][t(P_i)]+\cdots+\alpha_n[s(P_i)][P_n][t(Pi)]=\alpha_i[P_i].
 \end{gather*}
 Hence, we must have $\alpha_i=0$ for all $i=1,\dots,n$. Thus, $[P_1],\dots,[P_n]$ are linearly independent.
\end{proof}

We are now in a position to determine a basis of the skew-zigzag algebra. In particular, this gives us the dimension of the zigzag algebra, which was stated in {\cite[Section~3]{MR1872113}} without proof.

\begin{prop}[basis of the skew-zigzag algebra] \label{basis}
 Recall that $\Gamma=(V,E)$ is a connected graph and $v$ is a collection of skew-zigzag coefficients.
 \begin{itemize}\itemsep=0pt\sloppy
 \item If $\Gamma$ only has one vertex, then $\{1,X\}$ is a basis for $A_v(\Gamma)$.
 \item If $\Gamma$ has two vertices, $a$ and $b$, then $\{[a],[b],[a \,|\, b], [b \,|\, a], [a \,|\, b \,|\, a], [b \,|\, a \,|\, b]\}$ is a basis for~$A_v(\Gamma)$.
 \item If $\Gamma$ has three or more vertices, for all $x\in V$ we define $V_x$ to be set of all vertices that are connected to $x$ and we fix a vertex $y_x \in V_x$. Then
 \begin{gather} \label{J}
 J \coloneqq \{[a], [b \,|\, c], [x \,|\, y_x \,|\, x] \,|\, a,x \in V,\, \{b,c\} \in E\}
 \end{gather}
 is a basis for $A_v(\Gamma)$. In particular, we have $\dim A_v(\Gamma)=2|V|+2|E|$.
 \end{itemize}
\end{prop}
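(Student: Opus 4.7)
The plan is to treat the three cases separately, disposing of the first two quickly and devoting most of the work to $|V| \geq 3$. When $|V|=1$, the algebra is by definition $\Bbbk[X]/(X^2)$, for which $\{1,X\}$ is visibly a basis. When $|V|=2$, the defining ideal is generated by paths of length at least three and is thus homogeneous; by Remark \ref{grading} it sits inside $\bigoplus_{i \geq 3} \Bbbk \mathcal Q_i$, so the images of the paths of length $\leq 2$ in $D\Gamma$ form a basis, and an enumeration recovers exactly the six listed elements.

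For $|V| \geq 3$, I would split the argument into showing that $J$ spans $A_v(\Gamma)$ and that $J$ is linearly independent. For spanning, I would first prove by induction on length that every path in $D\Gamma$ of length at least three lies in $I_v$. The induction step is immediate: any longer path begins with a shorter one, so if the initial length-$n$ segment lies in $I_v$, so does the whole path. The subtle point is the base case $n=3$. Given a path $(a_1 \,|\, a_2 \,|\, a_3 \,|\, a_4)$, if $a_1 \neq a_3$ or $a_2 \neq a_4$ then one of its two length-two subpaths is a generator of $I_v$ of type (a). The remaining ``ping-pong'' case $(a_1 \,|\, a_2 \,|\, a_1 \,|\, a_2)$ I would handle by using connectedness of $\Gamma$ together with $|V| \geq 3$ to produce either a vertex adjacent to $a_1$ distinct from $a_2$ or a vertex adjacent to $a_2$ distinct from $a_1$, then applying relation (b) at that end to swap in a new middle vertex; the resulting path contains a non-loop length-two subpath and therefore vanishes. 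Once all paths of length $\geq 3$ are known to lie in $I_v$, every length-two path either dies (when $a_1 \neq a_3$) or, when $a_1 = a_3 = x$, reduces via relation (b) to a scalar multiple of $[x \,|\, y_x \,|\, x]$, so $J$ spans.

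For linear independence, I would use the $\mathbb{N}$-grading of $A_v(\Gamma)$ to reduce to independence inside each of the three graded pieces. In degrees $0$ and $1$ no relations are imposed (again by Remark \ref{grading}), so the idempotents $[a]$ and the length-one paths $[b \,|\, c]$, which all have distinct source/target pairs, are linearly independent by Lemma \ref{lin. ind.}. The main obstacle is the degree-two part, where Lemma \ref{lin. ind.} only applies once I show that each $[x \,|\, y_x \,|\, x]$ is nonzero. I would decompose $\Bbbk \mathcal Q_2$ as the direct sum of the ``straight'' paths $(a_1 \,|\, a_2 \,|\, a_3)$ with $a_1 \neq a_3$ and, for each vertex $x$, the $\deg(x)$-dimensional ``loop'' subspace $\mathrm{span}\{(x \,|\, a \,|\, x) : a \in V_x\}$, and then verify that $I_v \cap \Bbbk \mathcal Q_2$ respects this decomposition: it contains the entire straight subspace and, at each $x$, a loop-relation subspace of dimension exactly $\deg(x)-1$. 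Lemma \ref{coeff} enters at this last step, as it allows the full family of relations $(x \,|\, a \,|\, x) - v^x_{a,b}(x \,|\, b \,|\, x)$ to be rewritten in terms of the $\deg(x)-1$ independent relations obtained by fixing one index to $y_x$. This yields $\dim A_v(\Gamma)_2 = |V|$ with $[x \,|\, y_x \,|\, x] \neq 0$ for every $x$, completing the proof and giving $\dim A_v(\Gamma) = 2|V| + 2|E|$ overall.
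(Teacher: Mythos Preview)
Your proposal is correct and follows essentially the same approach as the paper: both dispose of the small cases quickly, kill the ping-pong path $(a\,|\,b\,|\,a\,|\,b)$ by borrowing a third neighbour via connectedness, conclude that all paths of length $\geq 3$ vanish, and then use the grading together with Lemma~\ref{lin. ind.} for independence. Your explicit dimension count on $I_v \cap \Bbbk\mathcal Q_2$ to verify $[x\,|\,y_x\,|\,x] \neq 0$ is a detail the paper leaves implicit when it invokes Lemma~\ref{lin. ind.}, but this is a refinement of the same argument rather than a different route.
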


\begin{proof}
 The author would like to thank a referee for bringing into light a much simpler proof of this proposition. 
 
 The f\/irst two claims are obvious. Therefore, we assume that $\Gamma$ has at least three vertices. Recall that the ideal $I_v$ is generated by the set
 \begin{gather*}
 X_v \coloneqq \big\{ (a \,|\, b \,|\, c), (x \,|\, y \,|\, x)-v_{y,z}^{x}(x \,|\, z \,|\, x)  \,|\, \{a,b\}, \{b,c\}, \{x,y\}, \{x,z\} \in E,\, a \ne c \big\},
 \end{gather*}
 and hence
 \begin{gather} \label{eq:in I}
 [a \,|\, b \,|\, c]=0, \qquad [x \,|\, y \,|\, x]=v_{y,z}^{x}[x \,|\, z \,|\, x],
 \end{gather}
 for all $\{a,b\}, \{b,c\}, \{x,y\}, \{x,z\} \in E$ with $a \ne c$. Note that the f\/irst equality implies that any path with three consecutive pairwise distinct vertices is equivalent to zero.
 Now, consider a path of the form $(a \,|\, b \,|\, a \,|\, b)$ where $\{a,b\} \in E$. Since $\Gamma$ is connected and has at least three vertices, either $a$ or $b$ is connected to a third vertex $d \ne a,b$. Suppose $b$ is connected to $d$. (The case that~$a$ is connected to~$d$ is analogous.) Then, \eqref{eq:in I} yields
 \begin{gather*}
 [a \,|\, b \,|\, a \,|\, b] = [a \,|\, b] [b \,|\, a \,|\, b] = v^b_{a,d} [a \,|\, b] [b \,|\, d \,|\, b] = [a \,|\, b \,|\, d] [d \,|\, b] = 0.
 \end{gather*}
 Thus, any path of length 3 or greater has an equivalence class equal to 0. So $A_v(\Gamma)$ only contains elements of degree~0,~1 or~2. It is clear that $\{[a] \,|\, a \in V\}$ and $\{[a \,|\, b] \,|\, \{a,b\} \in E\}$ are bases for $(A_v(\Gamma))_0$ and $(A_v(\Gamma))_1$ respectively since $I_v$ is concentrated in degrees 2 and higher.
 Now, let $x \in V$. Notice that for any $a \in V_x$, we have $[x \,|\, a \,|\, x]=v^x_{a,y_x}[x \,|\, y_x \,|\, x]$. So any element in $([x]A_v(\Gamma)[x])_2$ can be written as some nonzero scalar times $[x \,|\, y_x \,|\, x]$. Thus, $\{[x \,|\, y_x \,|\, x] \,|\, x \in V\}$ is a spanning set for $(A_v(\Gamma))_2$ and so, by Lemma~\ref{lin. ind.}, it is a basis of~$(A_v(\Gamma))_2$. Consequently, $J$~is a basis for~$A_v(\Gamma)$.
 
 Finally, notice that we have
 \begin{gather*}
 J=\{[a]\,|\, a \in V\} \sqcup \{[a \,|\, b] \,|\, \{a,b\} \in E\} \sqcup \{ [x \,|\, y_x \,|\, x] \,|\, x \in V\}.
 \end{gather*}
 Thus, $\left\vert{J}\right\vert = \left\vert{\{[a]\,|\, a \in V\}}\right\vert + \left\vert{\{[a \,|\, b] \,|\, \{a,b\} \in E\}}\right\vert + \left\vert{\{ [x \,|\, y_x \,|\, x] \,|\, x \in V\}}\right\vert=2|V|+2|E|$.
\end{proof}

\subsection{Skew-zigzag algebras as Frobenius algebras}

We begin by recalling the concept of a Frobenius algebra, referring the reader to \cite{Koc04} for further details.
Let $f$ be a bilinear form
\begin{gather*}
 f \colon \ V \times V \to \Bbbk,
\end{gather*}
where $V$ is a vector space of f\/inite dimension over the f\/ield $\Bbbk$. A trace map gives rise to a bilinear form $(x,y) \mapsto \tr(xy)$.

Let $A$ be a $\Bbbk$-algebra. Let $\trace$ be a $\Bbbk$-linear map
 \begin{gather*}
 \trace \colon \  A \to \Bbbk.
 \end{gather*}
We call $\trace$ a \emph{trace} map. A trace map gives rise to a bilinear form $(x,y) \mapsto \tr(xy)$.

\begin{defin} [Frobenius and symmetric algebra]
 Let $A$ be a f\/inite-dimensional unital associative $\Bbbk$-algebra. If there exists a~nondegenerate trace map $\trace \colon A \to \Bbbk$, then $A$ is said to be a~\emph{Frobenius} algebra. Moreover, if there exists a nondegenerate symmetric trace map, then $A$ is said to be a~\emph{symmetric Frobenius} algebra or simply a \emph{symmetric algebra}.
\end{defin}

Recall that $\Gamma=(V,E)$ is a connected graph. Let $v$ be a collection of skew-zigzag coef\/f\/icients, and let $P$ be a path in $D\Gamma$. Throughout this article, we shall def\/ine the source and the target of the equivalence class $[P]$, $s([P])$ and $t([P])$, to be $[s(P)]$ and $[t(P)]$ respectively. If $P_1$ and $P_2$ are both trivial paths or paths of length 1 then we have $[P_1]=[P_2]$ if and only if $P_1=P_2$. If $P_1$ and $P_2$ are paths of length 2 then $[P_1]$ is a scalar multiple of $[P_2]$ if and only if $s(P_1)=s(P_2)$ and $t(P_1)=t(P_2)$. Thus $[s(P)]$ and $[t(P)]$ are well-def\/ined.

If $[P] \neq 0$, we def\/ine the length of $[P]$, denoted $\ell ([P])$, to be $\ell (P)$. If $[P]=0$, then we simply def\/ine $\ell ([P])$ to be 0. Since $I_v$ is generated by homogeneous elements of the same degree, $\ell ([P])$~is well-def\/ined.

Let $J$ be as in \eqref{J} and def\/ine the $\Bbbk$-linear map $\trace \colon A_v(\Gamma) \to \Bbbk$ on the elements of $J$ as follows:
\begin{gather*} \label{tr}
 \trace([P])=
 \begin{cases}
 1 & \text{if } \ell([P])=2,\\
 0 & \text{otherwise.}
 \end{cases}
\end{gather*}
For any path $P=(a \,|\, b \,|\, a)$, we let $v_P=v_{b,y_a}^a$ where $y_a$ is def\/ined as in Proposition \ref{basis}.

\begin{prop} \label{symmetric algebra}
 Recall that $\Gamma=(V,E)$ is a connected graph, and let $v$ be a collection of skew-zigzag coefficients. Then $A_v(\Gamma)$ is a graded Frobenius algebra. In addition, $A(\Gamma)$ is a graded symmetric algebra.
\end{prop}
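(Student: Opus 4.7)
The plan is to verify directly that the $\Bbbk$-linear trace map $\trace$ defined on the basis $J$ from Proposition~\ref{basis} gives a nondegenerate bilinear form $B(x,y) \coloneqq \trace(xy)$, and that when $v$ is the constant collection of $1$'s this form is in addition symmetric. The cases $|V|=1,2$ will be handled as short direct checks on the two- and six-dimensional algebras, so I focus on the main case $|V|\geq 3$. The first observation I would make is that, by the proof of Proposition~\ref{basis}, $A_v(\Gamma)$ is concentrated in degrees $0,1,2$; since $\trace$ vanishes on degrees~$0$ and~$1$, the form $B$ is homogeneous of degree $-2$ and splits into three block pairings $(A_v(\Gamma))_i \times (A_v(\Gamma))_{2-i} \to \Bbbk$ for $i=0,1,2$. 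Hence it suffices to prove each block is nondegenerate.

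The second step is to read off the pairings of basis elements of $J$. Products landing in degrees $0$ or $1$ are killed by $\trace$; for products of total degree $2$, the only nonzero values will be
\begin{gather*}
B([a],[x \,|\, y_x \,|\, x]) = B([x \,|\, y_x \,|\, x],[a]) = \delta_{a,x}, \qquad B([a \,|\, b], [c \,|\, d]) = \delta_{b,c}\,\delta_{a,d}\, v^a_{b,y_a},
\end{gather*}
where I use \eqref{eq:in I} to rewrite $[a \,|\, b \,|\, a] = v^a_{b, y_a}[a \,|\, y_a \,|\, a]$, and the relation $[a_1 \,|\, a_2 \,|\, a_3] = 0$ for $a_1 \ne a_3$ to kill all other length-two products. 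The first two pairings are then given by permutation matrices on the dual bases $\{[a]\}$ and $\{[x\,|\,y_x\,|\,x]\}$, hence are manifestly nondegenerate. The third pairing is block-diagonal indexed by edges $\{a,b\}$, with the $2 \times 2$ block on the span of $[a\,|\,b]$ and $[b\,|\,a]$ equal to $\left(\begin{smallmatrix} 0 & v^a_{b,y_a} \\ v^b_{a,y_b} & 0 \end{smallmatrix}\right)$, whose entries are nonzero because the defining relation $v^a_{b,y_a}v^a_{y_a,b}=1$ forces every skew-zigzag coefficient to be a unit. This establishes the graded Frobenius property.

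For the symmetry claim, I would specialise to $v \equiv 1$, in which case the only nontrivial block $B([a\,|\,b],[c\,|\,d]) = \delta_{b,c}\delta_{a,d}$ is visibly invariant under $(a,b) \leftrightarrow (c,d)$, while the degree $0 \leftrightarrow 2$ pairings are by construction each other's transposes, so $B$ is symmetric overall. The main bookkeeping obstacle I expect is tracking the scalar $v^a_{b,y_a}$ that appears whenever $[a\,|\,b]\cdot[b\,|\,a]$ is reduced to the chosen basis~$J$; once this is handled correctly, nondegeneracy is forced by invertibility of the skew-zigzag coefficients, and the zigzag symmetry follows automatically.
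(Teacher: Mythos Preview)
Your proposal is correct and follows the direct-verification strategy that the paper explicitly acknowledges (``It is also possible to prove directly that the trace map defined above has the desired properties'') and carries out in its hidden \texttt{details} section. The paper's \emph{visible} proof, however, takes a different route: it cites \cite{MR1872113} for both claims and invokes the general criterion that a basic finite-dimensional algebra over an algebraically closed field is Frobenius provided $\soc(P)$ is simple for every projective indecomposable $P$ and $P/\rad(P) \mapsto \soc(P)$ gives a bijection on simple isomorphism classes, together with results from \cite{MR3289041} to identify the Frobenius form with the stated trace. Your argument trades these structural tools for an explicit block computation on the basis $J$---which is perhaps more transparent here and does not need $\Bbbk$ algebraically closed---while the paper's cited approach has the advantage of placing the result in a standard representation-theoretic framework. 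Both are valid, and in fact the paper's hidden details prove nondegeneracy by the same element-by-element pairing computation you give, just organized as ``for each nonzero homogeneous $x$, exhibit $y$ with $\trace(xy)\neq 0$'' rather than as a block-matrix analysis.
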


\begin{proof}
 As noted in \cite[Proposition~1]{MR1872113}, zigzag algebras are symmetric algebras. Furthermore, as stated in {\cite[Section~4.5]{MR1872113}}, skew-zigzag algebras are Frobenius algebras. This follows from the fact that a basic f\/inite-dimensional algebra over an algebraically closed f\/ield is Frobenius if the socle of any projective indecomposable module is simple, and the map $P/\rad(P) \mapsto \soc(P)$ is a bijection onto the set of isomorphism classes of simple modules. The fact that the Frobenius form is as def\/ined above then follows from Propositions 1.10.18 and 3.6.14 of \cite{MR3289041}. It is also possible to prove directly that the trace map def\/ined above has the desired properties.
 \details{
 Consider the trace map, $\trace$, def\/ined in above. For all $a \in V$, def\/ine $y_a$ as in Proposition \ref{basis}. By def\/inition, $\trace$ is $\Bbbk$-linear. Let us prove that it is nondegenerate. Let $x\in A_v(\Gamma)$ be nonzero. Since $\trace$ is a graded map, it suf\/f\/ices to suppose that $x$ is homogeneous. If $x=\sum_{a \in V} \alpha_a[a]$, then pick some $b \in V$ such that $\alpha_b \neq 0$. We then obtain
 \begin{gather*}
 x[b \,|\, y_b \,|\, b] = \left( \sum_{a \in V} \alpha_a[a]\right)[b \,|\, y_b \,|\, b]=\alpha_b[b \,|\, y_b \,|\, b].
 \end{gather*}
 Therefore, $\trace(x[b \,|\, y_b \,|\, b])=\alpha_b \neq 0$. Now, if $x=\sum_{\{a,b\} \in E} \alpha_{a,b}[a \,|\, b]$, then pick some $\{c,d\} \in E$ such that $\alpha_{c,d} \neq 0$. We then obtain
 \begin{gather*}
 x[d \,|\, c]= \left(\sum_{\{a,b\} \in E} \alpha_{a,b}[a \,|\, b]\right)[d \,|\, c]=\alpha_{c,d} [c \,|\, d \,|\, c] = \alpha_{c,d}v^c_{d,y_c}[c \,|\, y_c \,|\, c].
 \end{gather*}
 Hence, $\trace(x[d \,|\, c])= \alpha_{c,d}v^c_{d,y_c} \neq 0$. Finally, suppose that $x= \sum_{a \in V} \alpha_a[a \,|\, y_a \,|\, a]$. Then, pick some $b \in V$ such that $\alpha_b \neq 0$. Multiplying on the right by $[b]$ yields
 \begin{gather*}
 x[b]= \left(\sum_{a \in V} \alpha_a[a \,|\, y_a \,|\, a]\right)[b]=\alpha_b[b \,|\, y_b \,|\, b],
 \end{gather*}
 and so, $\trace(x[b])= \alpha_b \neq 0$. As a result, $\trace$ is nondegenerate and thus, $A_v(\Gamma)$ is a Frobenius algebra.
 
 Now, let us prove that if $P_1$ and $P_2$ are two paths in $D\Gamma$, then we have
 \begin{gather*}
 \trace([P_1][P_2])=
 \begin{cases}
 v_{P_1P_2} & \text{if } \ell([P_1])+\ell([P_2])=2, s([P_1])=t([P_2]) \text{ and }s([P_2])=t([P_1]) \\
 0 & \text{otherwise.}
 \end{cases}
 \end{gather*}
 It is clear that if $P_1$ and $P_2$ are paths such that $\ell([P_1])+\ell([P_2])\neq 2$, $s([P_1])\neq t([P_2])$ or $s([P_2])\neq t([P_1])$, then $\trace([P_1][P_2])=0$. Now assume that $P_1$ and $P_2$ are paths such that $\ell([P_1])+\ell([P_2])=2$, $s([P_1])=t([P_2])$ and $s([P_2])=t([P_1])$. Then $P_1P_2=[a \,|\, b \,|\, a]$ for some $\{a,b\} \in E$. Thus we have
 \begin{gather*}
 \trace([P_1][P_2])=v_{b,y_a}^a \trace([a \,|\, y_a \,|\, a])=v_{b,y_a}^a=v_{P_1P_2}.
 \end{gather*}
If $v=(1)$ then $v_{P_1P_2}=1=v_{P_2P_1}$ for every path $P_1$ and $P_2$ such that $P_1P_2$ is of length 2. Hence, by the above, $\trace$ is symmetric on the elements of $J$. By linearity, $\trace$ is symmetric. Consequently, $A(\Gamma)$ is a symmetric algebra.}
\end{proof}

\section{Moduli spaces of skew-zigzag algebras} \label{sec:moduli}

In this section we describe the moduli spaces of skew-zigzag algebras up to various types of isomorphism. We will see that such moduli spaces are related to the cohomology of the corresponding graph. As noted in the introduction, we will only consider the group structure, and not any geometric structure, on the moduli spaces to be introduced below. Throughout this section we f\/ix a f\/ield $\Bbbk$ that contains square roots.

Let $\Gamma=(V,E)$ be a connected graph and $v$ a collection of skew-zigzag coef\/f\/icients. Let $P = (a_1 \,|\, \ldots \,|\, a_n)$ be a path in $\Gamma$. Def\/ine
\begin{gather*}
 v_{P}^{\rm path} = \prod_{i=2}^{n-1} v_{a_{i-1},a_{i+1}}^{a_i}.
\end{gather*}
We call $v_{P}^{\rm path}$ the \emph{product of $v$ along $P$}. If $P$ is a cycle, then we also def\/ine
\begin{gather*} \label{eq:v-cycle-product}
 v_{P}^{\rm cycle} = v_{a_{n-1},a_2}^{a_1} \prod_{i=2}^{n-1} v_{a_{i-1},a_{i+1}}^{a_i} =v_{a_{n-1},a_2}^{a_1}v_{P}^{\rm path} ,
\end{gather*}
and call $v_{P}^{\rm cycle}$ the \emph{cycle product of $v$ along $P$}. Furthermore, we def\/ine $P^*= (a_n \,|\, \ldots \,|\, a_1)$. It is easily seen that $\big(v_{P}^{\rm path}\big)^{-1} = v_{P^*}^{\rm path}$.

Let $P_1= (a_1 \,|\, \ldots \,|\, a_n)$ and $P_2= (a_n \,|\, \ldots \,|\, a_m)$ be two paths. Notice that we have
\begin{gather}
 v_{P_1P_2}^{\rm path} = \left(\prod_{i=2}^{m-1}v_{a_{i-1},a_{i+1}}^{a_i}\right)= \left(\prod_{i=2}^{n-1}v_{a_{i-1},a_{i+1}}^{a_i}\right)v_{a_{n-1},a_{n+1}}^{a_n} \left(\prod_{i=n+1}^{m-1}v_{a_{i-1},a_{i+1}}^{a_i}\right) \nonumber\\
 \hphantom{v_{P_1P_2}^{\rm path}}{} =v_{P_1}^{\rm path} v_{a_{n-1},a_{n+1}}^{a_n} v_{P_2}^{\rm path}.\label{vproduct}
\end{gather}
If, in addition, $P_1$ and $P_2$ are cycles, then $P_1P_2$ is also a cycle and thus,
\begin{gather}
 v_{P_1P_2}^{\rm cycle} =v_{a_{m-1},a_2}^{a_1}v_{P_1P_2}^{\rm path}
 = v_{a_{m-1},a_2}^{a_1}v_{a_{n-1},a_{n+1}}^{a_n}v_{P_1}^{\rm path}v_{P_2}^{\rm path}\nonumber\\
 \hphantom{v_{P_1P_2}^{\rm cycle}}{}
 =v_{a_{m-1},a_{n+1}}^{a_1}v_{a_{n-1},a_{2}}^{a_1}v_{a_{n-1},a_{n+1}}^{a_n}v_{P_1}^{\rm path}v_{P_2}^{\rm path} \nonumber\\
 \hphantom{v_{P_1P_2}^{\rm cycle}}{}
 =v_{a_{n-1},a_{2}}^{a_1}v_{a_{m-1},a_{n+1}}^{a_n}v_{P_1}^{\rm path}v_{P_2}^{\rm path}
 =v_{P_1}^{\rm cycle}v_{P_2}^{\rm cycle}.\label{P1P2}
\end{gather}
In particular, the second equality of \eqref{P1P2} implies that
\begin{gather} \label{P1P2*c}
 v_{P_1P_2^*}^{\rm cycle}= v_{a_{n+1},a_2}^{a_1}v_{a_{n-1},a_{m-1}}^{a_n}v_{P_1}^{\rm path}v_{P_2^*}^{\rm path}.
\end{gather}

\begin{eg} [product of $v$ along a path and a cycle product]
Consider the graph $\Gamma=(V,E)$ where $V=\{a,b,c,d\}$ and $E=\{\{a,b\},\{a,d\},\{d,c\},\{b,c\},\{b,d\}\}$ and let $\Bbbk =\C$. Its associated double graph is the following quiver.
 \begin{center}
 \begin{tikzpicture}
 \coordinate (x) at (0,0);
 \coordinate (y) at (2,0);
 \coordinate (z) at (2,2);
 \coordinate (w) at (0,2);

 \draw [->-] (w) to [out=15,in=165] (z);
 \draw [->-] (z) to [out=195,in=-15] (w);
 \draw [->-] (x) to [out=15,in=165] (y);
 \draw [->-] (y) to [out=195,in=-15] (x);
 \draw [->-] (w) to [out=285,in=75] (x);
 \draw [->-] (x) to [out=105,in=255] (w);
 \draw [->-] (z) to [out=285,in=75] (y);
 \draw [->-] (y) to [out=105,in=255] (z);
 \draw [->-] (z) to [out=240,in=30] (x);
 \draw [->-] (x) to [out=60,in=210] (z);

 \filldraw[black] (0,0) circle (2pt);
 \filldraw[black] (2,0) circle (2pt);
 \filldraw[black] (2,2) circle (2pt);
 \filldraw[black] (0,2) circle (2pt);

 \node at (-0.25,0) {$d$};
 \node at (-0.25,2) {$a$};
 \node at (2.25,2) {$b$};
 \node at (2.25,0) {$c$};
 \end{tikzpicture}
 \end{center}
It is easy to show that the following are skew-zigzag coef\/f\/icients for $\Gamma$ using straightforward calculations:
 \begin{gather*}
 v^a_{b,d}=v^c_{b,d}=2, \quad v^a_{d,b}=v^c_{d,b}=1/2, \quad v^d_{a,c}=v^b_{a,c}=5, \quad v^d_{c,a}=v^b_{c,a}=1/5, \quad v^d_{b,c}=v^b_{d,c}=7,\\  v^d_{c,b}=v^b_{c,d}=1/7, \quad v^d_{a,b}=v^b_{a,d}=5/7, \quad v^d_{b,a}=v^b_{d,a}=7/5, \quad v^x_{y,y}=1 \quad \text{for all} \ \ \{x,y\} \in E.
 \end{gather*}
\details{
Clearly we have $v^x_{y,z}*v^x_{z,y}=1$ for all $\{x,y\},\{x,z\} \in E$. In addition, we have
 \begin{gather*}
 v^b_{a,c}*v^b_{c,d}*v^b_{d,a}=5*(1/7)*(7/5)=1, \qquad v^d_{a,c}*v^d_{c,b}*v^d_{b,a}=5*(1/7)*(7/5)=1.
 \end{gather*}
 }
Now, consider the cycles $P_1=(d \,|\, b \,|\, c \,|\, d)$ and $P_2=(d \,|\, a \,|\, b \,|\, d)$. Then, we have
 \begin{gather*}
 v_{P_1}^{\rm path}=14, \quad v_{P_2}^{\rm path}=5/14, \quad v_{P_1P_2}^{\rm path}=1, \quad v_{P_1}^{\rm cycle}=2, \quad v_{P_2}^{\rm cycle}=1/2, \quad \text{and}  \quad v_{P_1P_2}^{\rm cycle}=1.
 \end{gather*}
\end{eg}
\begin{rem}
 We note that $A_v(\Gamma)$ is both a $\Bbbk$-algebra and a $\Bbbk \mathcal{Q}_0$-algebra. Moreover, note that a homomorphism of $\Bbbk \mathcal{Q}_0$-modules is precisely a homomorphism of $\Bbbk$-modules that f\/ixes the vertices.
\end{rem}
\begin{lem} \label{cycle lem}
 Let $\Gamma$ be a connected graph with at least $3$ vertices and $v$ and $u$ be two collections of skew-zigzag coefficients. Suppose that
 \begin{gather*}
 \phi \colon \ A_v (\Gamma) \to A_{u} (\Gamma),
 \end{gather*}
 is an isomorphism of graded algebras such that $\phi([a]) = [a]$ for all $a \in V$. Then
 \begin{gather*} \label{cycle coef}
 u_{P}^{\rm cycle}=v_{P}^{\rm cycle},
 \end{gather*}
 for any cycle $P$.
\end{lem}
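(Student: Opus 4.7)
My plan is to reduce the statement to a telescoping identity in a family of scalars attached to edges of $\Gamma$.

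\textbf{Step 1 (Scalars on edges).} Since $\phi$ is a graded algebra homomorphism fixing each idempotent $[a]$, for every edge $\{a,b\} \in E$ we have
\begin{gather*}
 \phi([a \,|\, b]) = [a]\, \phi([a \,|\, b])\, [b] \in [a]\, A_u(\Gamma)_1\, [b].
\end{gather*}
By Proposition~\ref{basis}, this subspace is one-dimensional, spanned by $[a \,|\, b]$. Hence there exists $\lambda_{a,b} \in \Bbbk^{\times}$ with $\phi([a \,|\, b]) = \lambda_{a,b}\,[a \,|\, b]$. Define $\mu_{a,b} := \lambda_{a,b}\lambda_{b,a}$; note that $\mu_{a,b} = \mu_{b,a}$.

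\textbf{Step 2 (Ratio formula for the skew-zigzag coefficients).} For any vertex $a$ connected to both $b$ and $c$, apply $\phi$ to the relation $[a \,|\, b \,|\, a] = v_{b,c}^a [a \,|\, c \,|\, a]$ in $A_v(\Gamma)$. The left-hand side becomes $\lambda_{a,b}\lambda_{b,a}\,[a \,|\, b \,|\, a]$ and the right becomes $v_{b,c}^a \lambda_{a,c}\lambda_{c,a}\,[a \,|\, c \,|\, a]$. Using the corresponding relation $[a \,|\, b \,|\, a] = u_{b,c}^a [a \,|\, c \,|\, a]$ in $A_u(\Gamma)$ and the fact that $[a \,|\, c \,|\, a] \neq 0$ (by Proposition~\ref{basis}), we obtain
\begin{gather*}
 \mu_{a,b}\, u_{b,c}^a = v_{b,c}^a\, \mu_{a,c},
 \qquad \text{equivalently} \qquad
 \frac{v_{b,c}^a}{u_{b,c}^a} = \frac{\mu_{a,b}}{\mu_{a,c}}.
\end{gather*}

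\textbf{Step 3 (Telescoping).} Let $P = (a_1 \,|\, a_2 \,|\, \dots \,|\, a_n)$ with $a_1 = a_n$. Then
\begin{gather*}
 \frac{v_P^{\rm cycle}}{u_P^{\rm cycle}}
 = \frac{v^{a_1}_{a_{n-1},a_2}}{u^{a_1}_{a_{n-1},a_2}} \prod_{i=2}^{n-1} \frac{v^{a_i}_{a_{i-1},a_{i+1}}}{u^{a_i}_{a_{i-1},a_{i+1}}}
 = \frac{\mu_{a_1,a_{n-1}}}{\mu_{a_1,a_2}} \prod_{i=2}^{n-1} \frac{\mu_{a_i,a_{i-1}}}{\mu_{a_i,a_{i+1}}}.
\end{gather*}
Using the symmetry $\mu_{a,b} = \mu_{b,a}$, the numerator at index $i+1$ cancels the denominator at index $i$ for $2 \le i \le n-2$, leaving
\begin{gather*}
 \prod_{i=2}^{n-1} \frac{\mu_{a_i,a_{i-1}}}{\mu_{a_i,a_{i+1}}} = \frac{\mu_{a_1,a_2}}{\mu_{a_1,a_{n-1}}},
\end{gather*}
(the endpoint factors again using $a_n = a_1$ and symmetry). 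Multiplying by the front factor $\mu_{a_1,a_{n-1}}/\mu_{a_1,a_2}$ gives $1$, so $v_P^{\rm cycle} = u_P^{\rm cycle}$.

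The only real content is Step~2, which is a direct application of $\phi$ to the defining relations of the skew-zigzag algebra; there is no genuine obstacle, just bookkeeping to ensure the indices wrap around correctly via the identification $a_n = a_1$ and the symmetry of $\mu_{a,b}$.
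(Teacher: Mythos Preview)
Your proof is correct and follows essentially the same approach as the paper: both identify the scalars $\lambda_{a,b}$ (the paper's $\alpha_{a,b}$) from the vertex-fixing hypothesis, derive the ratio formula $v_{b,c}^a/u_{b,c}^a = \mu_{a,b}/\mu_{a,c}$ by applying $\phi$ to the degree-two relations, and then telescope around the cycle. Your introduction of the symmetric quantity $\mu_{a,b} = \lambda_{a,b}\lambda_{b,a}$ is a nice notational touch that makes the cancellation more transparent than in the paper's version, but the underlying argument is the same.
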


\begin{proof}
 For all $\{a,b\} \in E$, we have $\phi ([a \,|\, b])=\alpha_{a,b}[a \,|\, b]$ for some
 $\alpha_{a,b} \in \Bbbk^*$.\details{ Let $\phi([a \,|\, b])=\sum_{\{c,d\} \in E} \alpha_{c,d}[c \,|\, d]$ where $\alpha_{c,d} \in \Bbbk$ for all $\{c,d\} \in E$. Since $\phi$ is an algebra homomorphism, we must have $\phi(xy)= \phi(x) \phi(y)$ for all $x,y \in A_v (\Gamma)$. Hence
 \begin{align*}
 \phi ([a \,|\, b])=\phi ([a][a \,|\, b][b])=\phi ([a])\phi ([a \,|\, b])\phi ([b])=[a]\left( \sum_{\{c,d\} \in E} \alpha_{c,d}[c \,|\, d]\right)[b]=\alpha_{a,b}[a \,|\, b].
 \end{align*}
 Subsequently, $\phi ([a \,|\, b])=\alpha_{a,b}[a \,|\, b]$ for all $\{a,b\} \in E$. Since $\phi$ is an isomorphism, we have $\alpha_{a,b} \neq 0$ for all $\{a,b\} \in E$. }
 In addition, for any $\{a,b\} \in E$ we have
 \begin{gather*}
 \phi([a \,|\, b \,|\, a])=\phi([a \,|\, b][b \,|\, a])=\phi([a \,|\, b])\phi([b \,|\, a])=\alpha_{a,b}[a \,|\, b]\alpha_{b,a}[b \,|\, a]=\alpha_{a,b}\alpha_{b,a}[a \,|\, b \,|\, a].
 \end{gather*}
 Let $a,b,c \in V$ be such that $a$ is connected to both $b$ and $c$. Then
 \begin{align*}
 \alpha_{a,b}\alpha_{b,a} u_{b,c}^a[a \,|\, c \,|\, a]
 & =\alpha_{a,b}\alpha_{b,a} [a \,|\, b \,|\, a]
 =\alpha_{a,b}\alpha_{b,a}[a \,|\, b][b \,|\, a] \\
 & =\phi([a \,|\, b][b \,|\, a])=\phi([a \,|\, b \,|\, a])
 =\phi(v_{b,c}^a[a \,|\, c \,|\, a])
 =v_{b,c}^a\alpha_{a,c}\alpha_{c,a}[a \,|\, c \,|\, a].
 \end{align*}
 Thus, we have
 \begin{gather*} %\label{relation v's}
 u_{b,c}^a=\frac{\alpha_{a,c}\alpha_{c,a}}{\alpha_{a,b}\alpha_{b,a}} v_{b,c}^a.
 \end{gather*}
 Therefore, if $P=(a_1 \,|\, \ldots \,|\, a_n)$ is a cycle, we have
 \begin{gather*} %\label{step 1 cycle}
 u_{P}^{\rm cycle}
 = \frac{\alpha_{a_1,a_{n-1}}\alpha_{a_{n-1},a_1}}{\alpha_{a_1,a_2}\alpha_{a_2,a_1}} \left( \prod_{i=2}^{n-2} \frac{\alpha_{a_i,a_{i-1}}\alpha_{a_{i-1},a_i}}{\alpha_{a_i,a_{i+1}}\alpha_{a_{i+1},a_i}}\right) \frac{\alpha_{a_{n-1},a_{n-2}}\alpha_{a_{n-2},a_{n-1}}}{\alpha_{a_{n-1},a_1}\alpha_{a_1,a_{n-1}}}v_{P}^{\rm cycle} =v_{P}^{\rm cycle},
  \end{gather*}
 as required.
\end{proof}

The following proposition shows that the converse to Lemma~\ref{cycle lem} holds.

\begin{prop} \label{iso prop}
 Let $\Gamma=(V,E)$ be a connected graph with at least $3$ vertices and let $v$ and $u$ be two collections of skew-zigzag coefficients. Then $v_{P}^{\rm cycle}=u_{P}^{\rm cycle}$ for every cycle~$P$ if and only if there exists an isomorphism of graded algebras $\phi \colon A_v(\Gamma) \xrightarrow{\cong} A_u(\Gamma)$ such that $\phi([a]) = [a]$ for all $a \in V$.
\end{prop}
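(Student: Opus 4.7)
The forward direction is precisely Lemma \ref{cycle lem}, so the content is the converse. The plan is to construct $\phi$ as a rescaling of the basis elements. I will seek nonzero scalars $\alpha_{a,b} \in \Bbbk^*$ (one for each directed edge of $D\Gamma$) and define $\phi$ on generators by $\phi([a]) := [a]$ and $\phi([a \,|\, b]) := \alpha_{a,b}[a \,|\, b]$. Reversing the computation in the proof of Lemma \ref{cycle lem}, this descends to a well-defined graded algebra homomorphism if and only if
\begin{gather*}
 \frac{\alpha_{a,c}\alpha_{c,a}}{\alpha_{a,b}\alpha_{b,a}} = \frac{u_{b,c}^a}{v_{b,c}^a}
\end{gather*}
for every $a \in V$ and every pair of neighbors $b, c$ of $a$. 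Setting $\beta_{\{a,b\}} := \alpha_{a,b}\alpha_{b,a}$ (a symmetric function on $E$) and $w_{b,c}^a := u_{b,c}^a/v_{b,c}^a$, the task reduces to finding $\beta \colon E \to \Bbbk^*$ satisfying $\beta_{\{a,c\}}/\beta_{\{a,b\}} = w_{b,c}^a$ for every such triple. Since $\Bbbk$ contains square roots, I can then recover $\alpha_{a,b} = \alpha_{b,a}$ as a square root of $\beta_{\{a,b\}}$.

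First I will check that $w = (w_{b,c}^a)$ is itself a collection of skew-zigzag coefficients, by verifying the three conditions of Definition \ref{coeff defin} as quotients of the corresponding identities for $u$ and $v$. The hypothesis $u_P^{\rm cycle} = v_P^{\rm cycle}$ for every cycle $P$ then becomes $w_P^{\rm cycle} = 1$ for every cycle $P$. By Lemma \ref{coeff} applied to $w$, the constraints on $\beta$ at a single vertex $a$ are internally compatible: fixing $\beta_{\{a,b\}}$ for one neighbor $b$ of $a$ consistently determines $\beta_{\{a,c\}}$ for every other neighbor $c$ of $a$, independently of the reference choice.

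To construct $\beta$ globally, fix a spanning tree $T$ of $\Gamma$ with root $r$ and a distinguished neighbor $c_0$ of $r$ in $\Gamma$. Set $\beta_{\{r, c_0\}} := 1$ and use the vertex-$r$ rule to define $\beta_{\{r, c\}} := w_{c_0, c}^r$ for every neighbor $c$ of $r$ in $\Gamma$. Traverse $T$ by breadth-first search: at each vertex $a \neq r$ visited via its parent $p(a)$ in $T$, the value $\beta_{\{a, p(a)\}}$ is already known from the previous step, and I extend by
\begin{gather*}
 \beta_{\{a, c\}} := w_{p(a), c}^a \, \beta_{\{a, p(a)\}}
\end{gather*}
for every neighbor $c$ of $a$ in $\Gamma$. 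This defines $\beta$ on every edge of $\Gamma$, with the vertex constraints holding at every $a$ by construction.

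The main obstacle is consistency: every non-tree edge $\{a, b\} \notin E_T$ receives a value from both endpoints' propagations, and the two must agree. Chasing both values back through $T$ to the base edge $\{r, c_0\}$ and simplifying via the axioms of $w$, the contributions along the common tree segment up to the lowest common ancestor of $a$ and $b$ cancel (after reversing one of the two traversals using $w^x_{y,z} w^x_{z,y} = 1$), and the remaining discrepancy equals $w_C^{\rm cycle}$, where $C$ is the unique cycle in $T \cup \{\{a,b\}\}$; this equals $1$ by hypothesis. Thus $\beta$ is well-defined, and the resulting $\phi$ is a graded algebra homomorphism that sends the basis of Proposition \ref{basis} for $A_v(\Gamma)$ to a nonzero scalar multiple of the corresponding basis of $A_u(\Gamma)$, hence a graded algebra isomorphism fixing the vertices.
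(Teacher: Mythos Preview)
Your argument is correct and arrives at the same rescaling isomorphism as the paper, but by a genuinely different route. The paper keeps $u$ and $v$ separate throughout: it fixes a base edge $\{a,b\}$, defines $\alpha_{d,e}^2$ for each edge $\{d,e\}$ by an explicit formula $u_{b,a_2}^a v_{a_2,b}^a\, u_P^{\rm path} v_{P^*}^{\rm path}$ along an \emph{arbitrary} path $P$ from $a$ ending in $(d,e)$, and then proves this expression is path-independent by comparing two such paths via the cycle hypothesis applied to their concatenation. You instead exploit the group structure on ${\rm SZC}$ to pass to the quotient $w = u \cdot v^{-1}$, which collapses the hypothesis to $w_C^{\rm cycle}=1$ and halves the bookkeeping; you then build $\beta$ by propagating along a fixed spanning tree and check consistency only on the non-tree edges, where the discrepancy is exactly $w_C^{\rm cycle}$ for the fundamental cycle $C$. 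These are dual formulations of the same cohomological obstruction (path-independence over all paths versus tree-propagation plus cycle-closure). Your reduction to $w$ is a real simplification over the paper's two-variable manipulations and makes the connection to $H^1(\Gamma,\Bbbk^*)$ in Theorem~\ref{thm 1} visible already here; the paper's version, in exchange, avoids the auxiliary choice of a spanning tree and yields a closed formula for $\alpha_{d,e}$ directly.
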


\begin{proof}
 $\Rightarrow \colon$
 Fix an edge $\{a,b\} \in E$. For any $\{d,e\} \in E$, consider a path $P=(a_1 \,|\, \ldots \,|\, a_n)$ with $a_1 = a$, $a_{n-1} = d$ and $a_n = e$. Note that since $\Gamma$ is connected, there is at least one such path. Now, choose $\alpha_{d,e} = \alpha_{e,d} \in \Bbbk^*$ such that
 \begin{gather} \label{alphaabs}
 \alpha_{d,e}^2 = u_{b,a_2}^a v_{a_2, b}^a u_{P}^{\rm path} v_{P^*}^{\rm path}.
 \end{gather}
 We will show that since we have $v_{C}^{\rm cycle}=u_{C}^{\rm cycle}$ for any cycle~$C$, \eqref{alphaabs} is independent of the choice of the path. Let $P_1=(a_1 \,|\, \ldots \,|\, a_n)$ and $P_2=(b_1 \,|\, \ldots \,|\, b_m)$ be two paths such that $a=a_1=b_1$, $d=a_{n-1}=b_{m-1}$ and $e=a_n=b_m$. Since $P_1P_2^*$ is a cycle, we have
 \begin{gather*}
 v_{P_1P_2^*}^{\rm cycle}
 = v_{b_2, a_2}^{a_1}v_{a_{n-1}, b_{m-1}}^{a_{n}}v_{P_1}^{\rm path} v_{P_2^*}^{\rm path}
 =v_{b_2,b}^a v_{b,a_2}^a v_{P_1}^{\rm path} v_{P_2^*}^{\rm path},
 \end{gather*}
 where the f\/irst equality follows from \eqref{P1P2*c} and the second equality follows from Lemma \ref{coeff} and the fact that $a_{n-1}=b_{m-1}$. Similarly, we obtain
 \begin{gather*}
 u_{P_1P_2^*}^{\rm cycle} = u_{b_2,b}^a u_{b,a_2}^a u_{P_1}^{\rm path} u_{P_2^*}^{\rm path}.
 \end{gather*}
 Since $v_{P_1P_2^*}^{\rm cycle}=u_{P_1P_2^*}^{\rm cycle}$, we have
 \begin{gather*}
 v_{b_2,b}^a v_{b,a_2}^a v_{P_1}^{\rm path} v_{P_2^*}^{\rm path}
 = u_{b_2,b}^a u_{b,a_2}^a u_{P_1}^{\rm path} u_{P_2^*}^{\rm path}.
 \end{gather*}
 Consequently,
 \begin{gather*}
 u_{b,b_2}^a v_{b_2, b}^a u_{P_2}^{\rm path} v_{P_2^*}^{\rm path}
 = u_{b,a_2}^a v_{a_2, b}^a u_{P_1}^{\rm path} v_{P_1^*}^{\rm path},
 \end{gather*}
 as required. Thus the coef\/f\/icients $\alpha_{d,e}$ are path independent.
Now, def\/ine a map $\phi$ as follows
 \begin{gather*}
 \phi \colon  \ \Bbbk D\Gamma \to \Bbbk D\Gamma, \quad  (d) \mapsto (d) \quad \text{for all} \ \ d \in V, \quad (d \,|\, e) \mapsto \alpha_{d,e}(d \,|\, e) \quad \text{for all} \ \  \{d,e\} \in E.
 \end{gather*}
 By def\/inition this map is $\Bbbk$-linear. We then extend the map to longer paths by requiring it to be an algebra homomorphism. We will now show that $\phi(I_v) \subseteq I_u$.
 Now suppose $\{x,y\}, \{x,z\} \in E$. Let $P_1=(a_1 \,|\, \ldots \,|\, a_n)$ and $P_2=(b_1 \,|\, \ldots \,|\, b_m)$ be two paths such that $a=a_1=b_1$, $x=a_{n-1}=b_{m-1}$, $y=a_n$ and $z=b_m$. We have
 \begin{gather} \label{alpha over alpha}
 \frac{\alpha_{x,y}^2}{\alpha_{x,z}^2} v_{z,y}^x
 = \frac{u_{b,a_2}^a v_{a_2,b}^a u_{P_1}^{\rm path}v_{P_1^*}^{\rm path}} {u_{b,b_2}^a v_{b_2,b}^a u_{P_2}^{\rm path}v_{P_2^*}^{\rm path}}v_{z,y}^x
 = \frac{v_{a_2,b_2}^a v_{P_1^*}^{\rm path}v_{P_2}^{\rm path}} {u_{a_2,b_2}^a u_{P_1^*}^{\rm path}u_{P_2}^{\rm path}}v_{z,y}^x.
 \end{gather}
 The path $P_2(z \,|\, x \,|\, y)P_1^*$ is a cycle. Thus,
 \begin{gather*}
 v_{P_2(z \,|\, x \,|\, y)P_1^*}^{\rm cycle}
 = v_{a_2,b_2}^a v_{P_2(z \,|\, x \,|\, y)P_1^*}^{\rm path}
 = v_{a_2,b_2}^{a} v_{x,x}^z v_{x,x}^y v_{P_2}^{\rm path} v_{(z \,|\, x \,|\, y)}^{\rm path} v_{P_1^*}^{\rm path}
 = v_{a_2,b_2}^{a} v_{P_2}^{\rm path} v_{P_1^*}^{\rm path} v^x_{z,y},
 \end{gather*}
 where the second equality uses \eqref{vproduct}. Similarly, we obtain
 \begin{gather*}
 u_{P_2(z \,|\, x \,|\, y)P_1^*}^{\rm cycle}
 = u_{a_2,b_2}^{a}u_{P_2}^{\rm path}u_{P_1^*}^{\rm path}u_{z,y}^{x}.
 \end{gather*}
 Since we have $v_{P_2(z \,|\, x \,|\, y)P_1^*}^{\rm cycle} = u_{P_2(z \,|\, x \,|\, y)P_1^*}^{\rm cycle}$, we must also have
 \begin{gather} \label{rp}
 v_{a_2,b_2}^{a}v_{P_2}^{\rm path}v_{P_1^*}^{\rm path}v_{z,y}^{x}= u_{a_2,b_2}^{a}u_{P_2}^{\rm path}u_{P_1^*}^{\rm path}u_{z,y}^{x}.
 \end{gather}
 Combining \eqref{alpha over alpha} and \eqref{rp} gives us
 \begin{gather*} \label{relation u v}
 \frac{\alpha_{x,y}^2}{\alpha_{x,z}^2}v_{z,y}^x
 = \frac{v_{a_2,b_2}^a v_{P_1^*}^{\rm path}v_{P_2}^{\rm path}} {u_{a_2,b_2}^a u_{P_1^*}^{\rm path}u_{P_2}^{\rm path}}v_{z,y}^x
 = \frac{u_{a_2,b_2}^a u_{z,y}^{x}u_{P_1^*}^{\rm path}u_{P_2}^{\rm path}} {u_{a_2,b_2}^a u_{P_1^*}^{\rm path}u_{P_2}^{\rm path}} = u_{z,y}^{x}.
 \end{gather*}
 Hence, we can now deduce that $\phi(I_v) \subseteq I_u$, and thus $\phi$ induces an algebra homomorphism
 \begin{gather*}
 \bar{\phi} \colon \ A_v(\Gamma) \to A_u(\Gamma), \quad
 \begin{cases}
 [d] \mapsto [d], & d \in V, \\
 [d \,|\, e] \mapsto \alpha_{d,e} [d \,|\, e], & \{d,e\} \in E,\\
 [d \,|\, e \,|\, d] \mapsto \alpha_{d,e}^2 [d \,|\, e \,|\, d],& \{d,e\} \in E.
 \end{cases}
 \end{gather*}
 \details{To show $\phi(I_v) \subseteq I_u$, we only need to show that the elements that generate~$I_v$ are mapped to elements in~$I_u$. For generators of the form $(d \,|\, e \,|\, f)$ for some $\{d,e\}, \{e,f\} \in E$ with $d \neq f$, we have
 \begin{gather*}
 \phi((d \,|\, e \,|\, f))=\alpha_{d,e}\alpha_{e,f}(d \,|\, e \,|\, f) \in I_u.
 \end{gather*}
 For generators of the form $(d \,|\, e \,|\, d)-v_{e,f}^d(d \,|\, f \,|\, d)$ for some $\{d,e\}, \{d,f\} \in E$ with $e \neq f$, we have
 \begin{gather*}
 \phi((d \,|\, e \,|\, d)-v_{e,f}^d(d \,|\, f \,|\, d))
 = \alpha_{d,e}^2 (d \,|\, e \,|\, d) - v_{e,f}^d\alpha_{d,f}^2 (d \,|\, f \,|\, d)
 = \alpha_{d,e}^2 \left((d \,|\, e \,|\, d) - v_{e,f}^d \frac{\alpha_{d,f}^2}{\alpha_{d,e}^2} (d \,|\, f \,|\, d)\right).
 \end{gather*}
 Since $v_{e,f}^d\frac{\alpha_{d,f}^2}{\alpha_{d,e}^2}= u_{e,f}^d$, we obtain
 \begin{gather*}
 \phi((d \,|\, e \,|\, d)-v_{e,f}^d(d \,|\, f \,|\, d))
 = \alpha_{d,e}^2 \left((d \,|\, e \,|\, d)-u_{e,f}^d(d \,|\, f \,|\, d)\right) \in I_u.
 \end{gather*}
 Consequently, $\phi(I_v) \subseteq I_u$.}
 Since $\phi$ is surjective, $\bar{\phi}$ is also surjective. Since $A_v(\Gamma)$ and $A_u(\Gamma)$ have the same dimension, $\bar{\phi}$~is also injective. Consequently, $\bar{\phi}$ is an isomorphism and hence $A_v(\Gamma) \cong A_u(\Gamma)$.

 The converse of this proposition is Lemma~\ref{cycle lem}.
\end{proof}

We will now introduce the concept of \emph{graph cohomology}.
Let $\Gamma=(V,E)$ be a connected graph and $D\Gamma=(V,E')$ its double graph. Let
\begin{gather*}
 \mathbb{Z}V=\left\{ \sum_{a \in V} \alpha_aa \,|\, \alpha_a \in \mathbb{Z} \text{ for all } a \in V \right\},
\qquad
 \mathbb{Z}E'=\left\{ \sum_{e \in E'} \alpha_ee \,|\, \alpha_e \in \mathbb{Z} \text{ for all } e \in E' \right\}.
\end{gather*}
Def\/ine the map $\delta$ by
\begin{gather*}
 \delta \colon \ \mathbb{Z}E'/\{(a \,|\, b) + (b \,|\, a) \,|\, \{a,b\} \in E\} \to \mathbb{Z}V, \qquad e \mapsto s(e)-t(e),
\end{gather*}
where we extend the map by linearity. For any path $(a_1 \,|\, \ldots \,|\, a_n)$, we associate the element $\sum\limits_{i=1}^{n-1} (a_i|a_{i+1}) \in \mathbb{Z}E'$ to it. Notice that if $a_1=a_n$, then $\delta \left(\sum\limits_{i=1}^{n-1} (a_i|a_{i+1})\right)=0$. Let $\mathcal{C}=\ker \delta$. We call $\mathcal{C}$ the \emph{space of cycles} of $\Gamma$. The space of cycles is a $\mathbb{Z}$-submodule of the free $\mathbb{Z}$-module $\mathbb{Z}E'/\{(a \,|\, b) + (b \,|\, a) \,|\, \{a,b\} \in E\}$ and thus it is a free $\mathbb{Z}$-module. Consequently, it has a $\Z$-basis.

\begin{lem}[\protect{\cite[Section~4.4]{MR3014418}}] \label{dimension}
 Let $\Gamma=(V,E)$ be a connected graph. Then $rank( \mathcal{C}) = |E|-|V|+1$.
\end{lem}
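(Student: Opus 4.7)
The plan is to exhibit an explicit $\mathbb{Z}$-basis of $\mathcal{C}$ indexed by the non-tree edges of a spanning tree of $\Gamma$. Since $\Gamma$ is connected, it admits a spanning tree $T=(V,E_T)$ with $|E_T|=|V|-1$. Fix once and for all a choice of orientation for each edge of $\Gamma$; the defining relation $(a\,|\,b)+(b\,|\,a)=0$ then exhibits $\mathbb{Z}E'/\{(a\,|\,b)+(b\,|\,a) \,|\, \{a,b\}\in E\}$ as a free $\mathbb{Z}$-module of rank $|E|$, with one basis generator per edge of $\Gamma$. For each non-tree edge $e=\{a,b\}\in E\setminus E_T$, oriented as $(a\,|\,b)$, let $P_e$ denote the unique path in $T$ from $b$ to $a$, and set $C_e := (a\,|\,b)+P_e$, where $P_e$ is identified with the sum of its successive directed edges in the quotient. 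Telescoping along $P_e$ shows $\delta(C_e)=0$, so $C_e\in\mathcal{C}$.

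Next I would prove that $\mathcal{B}:=\{C_e \,|\, e\in E\setminus E_T\}$ is a $\mathbb{Z}$-basis of $\mathcal{C}$. Linear independence is immediate from the choice of basis above: the generator corresponding to $e$ appears with coefficient $\pm 1$ in $C_e$, while it does not appear at all in any other $C_{e'}$ (since $P_{e'}$ lies entirely inside $T$ and no non-tree edge besides $e'$ is used). For spanning, given $c\in\mathcal{C}$ written as $c=\sum_{e\in E}n_e\bar e$ in the fixed basis, the difference $c':=c-\sum_{e\in E\setminus E_T}n_e C_e$ is supported on the tree edges and still lies in $\ker\delta$, so it suffices to show that $\delta$ restricted to the $\mathbb{Z}$-span of $E_T$ is injective.

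I would establish this injectivity by induction on $|V|$ via leaf-pruning: a leaf $v$ of $T$ is incident to a unique tree edge $e_v$, so $v$ appears in $\delta(\bar e_v)$ with coefficient $\pm 1$ and in no other $\delta(\bar e)$ with $e\in E_T$; hence the coefficient of $v$ in $\delta(c')=0$ forces $n_{e_v}=0$, and one deletes $v$ and $e_v$ to reduce to a spanning tree on fewer vertices (the base case being a single vertex, for which the claim is vacuous). Consequently $c'=0$, so $\mathcal{B}$ spans $\mathcal{C}$, and we conclude $\operatorname{rank}(\mathcal{C})=|\mathcal{B}|=|E|-|E_T|=|E|-|V|+1$. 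The only slightly delicate point is the leaf-pruning injectivity lemma; everything else is straightforward bookkeeping once the spanning tree is fixed.
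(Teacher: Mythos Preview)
Your proof is correct. The paper does not actually prove this lemma; it simply cites it from \cite[Section~4.4]{MR3014418}. Your argument via fundamental cycles of a spanning tree is the standard elementary proof, and the leaf-pruning step establishing injectivity of $\delta$ on the $\mathbb{Z}$-span of the tree edges is sound (every finite tree with at least two vertices has a leaf, and removing a leaf together with its incident edge yields a tree on one fewer vertex, so the induction goes through).

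It is worth noting that the paper's subsequent Lemma~\ref{important} carries out essentially the same spanning-tree construction of the fundamental cycles $C_e$, but with the logical dependence reversed: there the rank formula (this lemma, taken from Sunada) is invoked to conclude that the $|E|-|V|+1$ visibly independent cycles form a basis, whereas you establish the basis property directly via the injectivity argument and read off the rank as a consequence. Your route is self-contained and in effect subsumes the proof of Lemma~\ref{important} as well.
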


Let $T=(V_T,E_T)$ be a spanning tree of a graph $\Gamma=(V,E)$ and let $e \in V\setminus V_T$. We shall denote by $T+e$ the subgraph of $\Gamma$ with vertex set $V$ and edge set $E_T \cup \{e\}$. Moreover, for any cycle $C=(a_1 \,|\, \ldots \,|\, a_n=a_1)$, we say that an edge $(a \,|\, b)$ is in $C$ if $(a \,|\, b) \in \{(a_i \,|\, a_{i+1})\} \,|\, 1\leq i \leq n-1\}$.

\begin{lem} \label{important}
 Let $\Gamma=(V,E)$ be a connected graph with $ \vert{V} \vert = n$ and $ \vert{E} \vert = m$. There exists a~basis of $\mathcal{C}$, $\mathcal{B}=\{C_1, \ldots ,C_{m-n+1}\}$, such that, for all $1\leq i \leq m-n+1$, there exists $(b_i \,|\, c_i)$ in~$C_i$ such that $(b_i \,|\, c_i)$ is not in~$C_k$ for $k \ne i$.
\end{lem}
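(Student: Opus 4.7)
The idea is the classical construction of \emph{fundamental cycles} relative to a spanning tree. By the remark at the end of Section~\ref{sec:graph-theory}, $\Gamma$ admits a spanning tree $T = (V_T, E_T)$ with $|E_T| = n-1$, so $E \setminus E_T$ has exactly $m - n + 1$ elements. Enumerate these as $e_1, \ldots, e_{m-n+1}$ and write $e_i = \{b_i, c_i\}$.

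For each $i$, the subgraph $T+e_i$ has a unique simple cycle through $e_i$, obtained by concatenating $e_i$ with the unique path in $T$ from $c_i$ to $b_i$. Let $C_i$ denote this cycle, oriented so that it traverses the directed edge $(b_i \,|\, c_i)$. Then $C_i$ is a closed walk, so $C_i \in \mathcal{C}$. By construction the only non-tree edge of $C_i$ is $e_i$; hence for $k \ne i$ the undirected edge $e_i$ is not among the edges of $C_k$, and so the directed edge $(b_i \,|\, c_i)$ is not in $C_k$. This delivers the edge-uniqueness property required by the lemma.

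It remains to verify that $\mathcal{B} = \{C_1, \ldots, C_{m-n+1}\}$ is a $\mathbb{Z}$-basis of $\mathcal{C}$. For linear independence, suppose $\sum_i \alpha_i C_i = 0$ in the quotient $\mathbb{Z}E'/\{(a\,|\,b)+(b\,|\,a)\}$. The previous paragraph shows that for each $j$ the directed edge $(b_j\,|\,c_j)$ appears only in $C_j$, and there with coefficient $\pm 1$, so the coefficient of $(b_j\,|\,c_j)$ in the sum equals $\pm \alpha_j$. Hence each $\alpha_j=0$. For spanning, given any $\gamma \in \mathcal{C}$, let $\beta_i \in \mathbb{Z}$ be the coefficient of $(b_i \,|\, c_i)$ in $\gamma$ and set $\gamma' := \gamma - \sum_i \beta_i C_i$. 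Then $\gamma' \in \mathcal{C}$ has vanishing coefficient on every non-tree edge, so $\gamma'$ is supported on $E_T$. But applying Lemma~\ref{dimension} to the tree $T$ gives $\mathrm{rank}(\mathcal{C}_T) = (n-1) - n + 1 = 0$, so $\gamma'=0$ and $\gamma = \sum_i \beta_i C_i$.

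There is no substantial obstacle; the only point that merits care is the sign bookkeeping inside the quotient $\mathbb{Z}E'/\{(a\,|\,b)+(b\,|\,a)\}$, which is harmless because each $C_i$ was chosen so as to traverse $e_i$ with coefficient $+1$, and the distinguishing edges $e_i$ are pairwise distinct.
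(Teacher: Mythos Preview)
Your proof is correct and follows essentially the same approach as the paper: both construct the fundamental cycles $C_i$ relative to a spanning tree $T$, with the distinguishing edge $(b_i\,|\,c_i)$ being the unique non-tree edge $e_i$ of $C_i$. The only minor difference is in verifying that $\mathcal{B}$ is a $\mathbb{Z}$-basis: the paper argues linear independence plus a rank count (Lemma~\ref{dimension} applied to $\Gamma$) to get a $\mathbb{Q}$-basis, then upgrades to a $\mathbb{Z}$-basis using that the coefficient of $e_i$ in $C_i$ is $1$, whereas you give a direct spanning argument by subtracting off and invoking Lemma~\ref{dimension} on the tree $T$ instead.
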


\begin{proof} Let $\Gamma=(V,E)$ be a connected graph with $\left\vert{V}\right\vert = n$ and $\left\vert{E}\right\vert = m$. Pick a spanning tree $T=(V_T,E_T)$ of $\Gamma$. Let $E \setminus E_T= \{e_1, \ldots, e_{m-n+1}\}$. Notice that for each $i \in \{1,\ldots, m-n+1\}$, $T+e_i$ contains one simple cycle. Thus, the quiver $D(T+e_i)$ contains two corresponding simple cycles, $C_i$ and $-C_i$. Let
 \begin{gather*}
 \mathcal{B}=\{C_i \,|\, i=1, \ldots, m-n+1 \}.
 \end{gather*}
 Clearly the elements of $\mathcal{B}$ are linearly independent. Moreover, $\left\vert{\mathcal{B}}\right\vert =m-n+1$. Thus, by Lemma~\ref{dimension}, $\mathcal{B}$ is a $\Q$-basis of $\mathcal{C} \otimes_\Z \Q$. But since the coef\/f\/icient of $e_i$ in $C_i$ is one, it follows that~$\mathcal{B}$ is a $\Z$-basis of $\mathcal{C}$. Finally, it is also clear that for all $i=1, \ldots, m-n+1$ we have $(b_i \,|\, c_i)$ in~$C_k$ if and only if $k=i$, where $(b_i \,|\, c_i)$ is the corresponding directed edge of~$e_i$ in~$C_i$.
\end{proof}

Let $\Gamma=(V,E)$ be a connected graph and $D \Gamma =(V,E')$ be its double graph. The \emph{graph cohomology} of $\Gamma$ is def\/ined to be the space of group homomorphisms from $\mathcal{C}$ to $\Bbbk^*$:
\begin{gather*}
 H^1(\Gamma,\Bbbk^*)=\Hom_{\rm group}(\mathcal{C},\Bbbk^*).
\end{gather*}
Note that the operation in this group is pointwise multiplication. 

Let $z \in \mathcal{C}$. Take a representative $\sum\limits_{e \in E'} \alpha_e e$ of $z$ in $\N E'=\Big\{ \sum\limits_{e \in E'} \alpha_ee \,|\, \alpha_e \in \N \text{ for all } e \in E' \Big\}$. Since $z \in \mathcal{C}=\ker \delta$, for each $a \in V$ we have $\sum\limits_{e \in S} \alpha_e =\sum\limits_{e \in T} \alpha_e$ where $S=\{e \in E' \,|\, s(e)=a\}$ and $T=\{e \in E' \,|\, t(e)=a\}$. Thus, we can choose $n_a \in \N$ and vertices $b_{a,1}, \ldots ,b_{a,n_a}, c_{a,1}, \ldots ,c_{a,n_a}$ such that
 \begin{gather*}
 \sum_{a \in V} \sum_{i=1}^{n_a} \left( (b_{a,i} \,|\, a) + (a \,|\, c_{a,i}) \right) = 2z.
 \end{gather*}
Now, for a collection of skew-zigzag coef\/f\/icients $v$, def\/ine
 \begin{gather*}
 f_{v,a} (z) =\prod_{i=1}^{n_a} v^a_{b_{a,i},c_{a,i}}.
 \end{gather*}
 \begin{lem}
 Let $\Gamma=(V,E)$ be a connected graph, $v$ be a collection of skew-zigzag coefficients, $a \in V$ and $z \in \mathcal{C}$. Set $b_{a,1}, \ldots ,b_{a,n_a}$ and $c_{a,1}, \ldots ,c_{a,n_a}$ as in the previous paragraph. Then, $f_{v,a} (z)$ is independent of the order chosen for $b_{a,1}, \ldots ,b_{a,n_a}$ and $c_{a,1}, \ldots ,c_{a,n_a}$ and $f_{v,a} (z)$ is independent of the representative of~$z$ chosen in~$\N E'$.
 \end{lem}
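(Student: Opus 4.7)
The plan is to handle the two claims in sequence: first invariance of the product under independent reordering of the $b$- and $c$-sequences at vertex $a$, then invariance under change of representative in $\N E'$, with the second step leveraging the first.

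For order independence, I would first observe that applying the same permutation to both the $b$- and the $c$-sequence merely reorders the factors of a product in $\Bbbk^*$ and so has no effect. The only real content is therefore that one may permute the $c$'s alone (or the $b$'s alone). Since transpositions generate the symmetric group, it suffices to verify the swap identity
\begin{gather*}
  v^a_{b_1,c_1}\, v^a_{b_2,c_2} = v^a_{b_1,c_2}\, v^a_{b_2,c_1}.
\end{gather*}
This is immediate from Lemma~\ref{coeff} applied to the chain $b_1, c_1, b_2, c_2$, which yields $v^a_{b_1,c_1} v^a_{c_1,b_2} v^a_{b_2,c_2} = v^a_{b_1,c_2}$, combined with the relation $v^a_{c_1,b_2} v^a_{b_2,c_1} = 1$ from the second bullet of Definition~\ref{coeff defin}.

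For representative independence, I would use that any two representatives $\sum_e \alpha_e e$ and $\sum_e \alpha'_e e$ in $\N E'$ of the same class in $\Z E'/\{(a\,|\,b) + (b\,|\,a)\}$ differ by an integer linear combination of the relations $(x\,|\,y) + (y\,|\,x)$. Adding first all relations with positive net coefficient produces a common ``thickened'' representative dominating both, after which one reaches either original representative by removing relations; hence it suffices to prove that augmenting a fixed representative by a single relation $(x\,|\,y) + (y\,|\,x)$ leaves $f_{v,a}(z)$ unchanged. Such an augmentation contributes exactly one additional $b=x$ and one additional $c=x$ to the data at vertex $y$ (and symmetrically at vertex $x$), while leaving all other vertices untouched. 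By the order independence established above, I may pair the new $b=x$ with the new $c=x$ at vertex $y$, contributing only the factor $v^y_{x,x}$, which equals $1$ by the first bullet of Definition~\ref{coeff defin}. The same argument at vertex $x$ and the triviality at every other vertex finish the argument.

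The one point requiring real attention is the reduction to single-pair moves in the second step, but once one notes that the thickening above keeps every intermediate element in $\N E'$, this is essentially bookkeeping. Everything else follows directly from Lemma~\ref{coeff} and the defining conditions on $v$.
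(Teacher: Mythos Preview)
Your proposal is correct and follows essentially the same approach as the paper: the paper verifies the swap identity for adjacent transpositions of the $c$-sequence via Lemma~\ref{coeff} and Definition~\ref{coeff defin}, and then handles representative independence by observing that removing a pair with $b_{a,n}=c_{a,n}$ contributes only the trivial factor $v^a_{b_{a,n},b_{a,n}}=1$. Your version is somewhat more explicit in the second step, supplying the ``thickening'' argument that justifies why it suffices to consider a single relation $(x\,|\,y)+(y\,|\,x)$, which the paper leaves implicit.
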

 \begin{proof}
 For the f\/irst claim, it suf\/f\/ices to show that $f_{v,a} (z)$ remains unchanged when we interchange $c_{a,j}$ and $c_{a,j+1}$ for some $j \in \{1,\dots, n_a -1\}$. Indeed, we have
 \begin{gather*}
 v^a_{b_{a,j},c_{a,j+1}}v^a_{b_{a,j+1},c_{a,j}}\prod_{\substack{i=1 \\ i\neq j,j+1}}^{n_a} v^a_{b_{a,i},c_{a,i}} \\
 \qquad{} = v^a_{b_{a,j},c_{a,j}}v^a_{c_{a,j},c_{a,j+1}}v^a_{b_{a,j+1},c_{a,j+1}}v^a_{c_{a,j+1},c_{a,j}}\prod_{\substack{i=1 \\ i\neq j,j+1}}^{n_a} v^a_{b_{a,i},c_{a,i}} =\prod_{i=1}^{n_a} v^a_{b_{a,i},c_{a,i}},
 \end{gather*}
 where the second equality follows from Lemma~\ref{coeff} and the third equality follows from the second condition of Def\/inition~\ref{coeff defin}.

For the second part of the lemma, it suf\/f\/ices to show that $f_{v,a}(z)$ remains unchanged when you remove $b_{a,n}$ and $c_{a,n}$ when
 $b_{a,n}=c_{a,n}$. But this is obvious as $v^a_{b_{a,n},c_{a,n}}=v^a_{b_{a,n},b_{a,n}}=1$.
 \end{proof}

Now, for any collection of skew-zigzag coef\/f\/icients, $v$, def\/ine the map $f_v$ to be
 \begin{gather*}
 f_v \colon \ \mathcal{C} \to \Bbbk^*, \qquad z \mapsto \prod_{a \in V} f_{v,a} (z).
 \end{gather*}
It is clear by the def\/inition of $f_v$ that we have $f_v(z_1 +z_2)=f_v(z_1)f_v(z_2)$ for any $z_1,z_2 \in \mathcal{C}$. Thus, $f_v \in H^1(\Gamma,\Bbbk^*)$. In addition, if $C$ is a cycle in $D\Gamma$, then it is clear that $f_v(C)=v^{\rm cycle}_C$.

Let ${\rm SZC}=\{v \,|\, v \text{ is a collection of skew-zigzag coefficients}\}$. For $u,v \in {\rm SZC}$, def\/ine $u \cdot v$ to be the set of skew-zigzag coef\/f\/icients def\/ined as follows:
 \begin{gather*}
 (u\cdot v)^a_{b,c} = u^a_{b,c}v^a_{b,c},
 \end{gather*}
for all $\{a,b\}, \{a,c\} \in E$. It is straightforward to check that this introduces a group structure on~${\rm SZC}$. We then def\/ine an equivalence relation on ${\rm SZC}$ as follows:
\begin{gather*}
\begin{split}
& v \equiv u \iff \text{there exists an isomorphism } \phi \colon A_v(\Gamma) \to A_u(\Gamma)\\
& \hphantom{v \equiv u \iff{}}{}  \text{such that } \phi([a])=[a] \text{ for all } a \in V.
\end{split}
\end{gather*}
\details{
 Let us prove that this relation is in fact an equivalence relation. Since the identity map f\/ixes the vertices, we clearly have $v \equiv v$, and so $\equiv$ is ref\/lexive. Moreover, if $\phi$ is an isomorphism which f\/ixes the vertices, then $\phi^{-1}$ is an isomorphism which also f\/ixes the vertices. Hence, for any two collections of skew-zigzag coef\/f\/icients~$v$ and $u$, if $v \equiv u$ via some map~$\phi$, then we must have $u \equiv v$ via $\phi^{-1}$. Therefore $\equiv$ is symmetric. Finally, suppose that $\phi$ and $\psi$ are two isomorphism that f\/ix the vertices, then their composition, $\psi \circ \phi$, is also an isomorphism that f\/ixes the vertices. Thus, for any three collections of skew-zigzag coef\/f\/icients $v$, $u$ and $w$, if $v \equiv u$ and $u \equiv w$ via the maps $\phi$ and $\psi$ respectively, then $v \equiv w$ via the isomorphism $\psi \circ \phi$. Hence $\equiv$ is transitive and consequently an equivalence relation.
}
By Lemma~\ref{cycle lem} and Proposition~\ref{iso prop}, we have
\begin{gather*}
 v \equiv u \iff v_P^{\rm cycle} = u_P^\text{cycle} \text{ for every cycle } P \text{ in } \Gamma.
\end{gather*}

Now let
\begin{gather*} \label{eq:Sigma-def}
 \Sigma = \{A_v(\Gamma) \,|\, v \in {\rm SZC}\}.
\end{gather*}
Let $\sim$ be the equivalence relation on $\Sigma$ def\/ined by
\begin{gather*}
 A_v(\Gamma)\sim A_u(\Gamma) \iff \text{there exists an isomorphism } \phi \colon A_v(\Gamma) \to A_u(\Gamma)\\
  \hphantom{A_v(\Gamma)\sim A_u(\Gamma) \iff{}}{} \text{such that } \phi([a])=[a] \ \forall\, a \in V.
\end{gather*}
Let $v \in {\rm SZC}$. From now on, we shall use $[v]$ to denote the equivalence class of $v$ in ${\rm SZC}/_{\equiv}$. Furthermore, we shall use $[A_v(\Gamma)]_{\sim}$ and $[A_v(\Gamma)]_{\cong}$ to denote the equivalence classes of~$A_v(\Gamma)$ in~$\Sigma/_{\sim}$ and~$\Sigma/_{\cong}$ respectively, where $\cong$ denotes isomorphism of graded algebras. Note that $\Sigma/_{\sim}$ and ${\rm SZC}/_{\equiv}$ are naturally isomorphic sets via the map
 \begin{gather} \label{phi}
 \phi \colon  \ \Sigma/_\sim \to {\rm SZC}/_{\equiv}, \qquad [A_v(\Gamma)]_{\sim} \mapsto [v].
 \end{gather}

\begin{theo} \label{thm 1} Let $\Gamma=(V,E)$ be a connected graph. Then we have
 \begin{gather*}
 \Sigma/_\sim \cong {\rm SZC}/_{\equiv} \cong H^1(\Gamma,\Bbbk^*),
 \end{gather*}
 where the first isomorphism is an isomorphism of sets and the second isomorphism is an isomorphism of groups.
\end{theo}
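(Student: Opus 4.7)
The plan is to establish the two claims separately. The first isomorphism $\Sigma/_\sim \cong {\rm SZC}/_\equiv$ is given by the map $\phi$ of \eqref{phi}: by Lemma~\ref{cycle lem} and Proposition~\ref{iso prop}, $A_v(\Gamma) \sim A_u(\Gamma)$ iff $v \equiv u$, so $\phi$ is well defined and injective, and it is tautologically surjective by the definition of $\Sigma$. This part is immediate. The substantive content is the second isomorphism ${\rm SZC}/_\equiv \cong H^1(\Gamma,\Bbbk^*)$.

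For the second isomorphism I would work with the map $F\colon {\rm SZC} \to H^1(\Gamma,\Bbbk^*)$, $v \mapsto f_v$. That $F$ is a group homomorphism is immediate from the definitions: since $(u\cdot v)^a_{b,c} = u^a_{b,c}v^a_{b,c}$, the product defining $f_{u\cdot v,a}(z)$ factors as $f_{u,a}(z) f_{v,a}(z)$, whence $f_{u\cdot v} = f_u\cdot f_v$. For injectivity on the quotient, I would combine three ingredients: the observation $f_v(C) = v^{\rm cycle}_C$ for every cycle $C$ in $D\Gamma$; the characterisation $v \equiv u$ iff $v^{\rm cycle}_P = u^{\rm cycle}_P$ for every cycle $P$ (from Lemma~\ref{cycle lem} and Proposition~\ref{iso prop}); and Lemma~\ref{important}, which gives a $\Z$-basis of $\mathcal{C}$ consisting of cycles. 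Together these yield $f_v = f_u$ if and only if $v \equiv u$, so $F$ descends to a well-defined injective group homomorphism $\bar F\colon {\rm SZC}/_\equiv \hookrightarrow H^1(\Gamma,\Bbbk^*)$.

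The main work, and the step I expect to be the obstacle, is surjectivity of $\bar F$. Given $f \in H^1(\Gamma,\Bbbk^*)$, I would construct $v \in {\rm SZC}$ explicitly using the basis of Lemma~\ref{important}. Fix a spanning tree $T$ and let $e_1,\ldots,e_r$ (with $e_i = \{p_i,q_i\}$) be the non-tree edges, choosing orientations so that each basis cycle $C_i$ traverses $e_i$ as $(p_i \,|\, q_i)$. For each vertex $a$, define $\psi_a\colon V_a \to \Bbbk^*$ by declaring $\psi_a(b) = 1$ whenever $\{a,b\}$ is a tree edge or $(a,b) = (p_i,q_i)$ for some $i$, and $\psi_{q_i}(p_i) = f(C_i)$ for each $i$; these rules are non-conflicting because a given ordered pair of adjacent vertices falls into at most one case. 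Now set $v^a_{b,c} \coloneqq \psi_a(b)/\psi_a(c)$; the ratio form automatically satisfies the three conditions of Definition~\ref{coeff defin}. Finally, compute $v^{\rm cycle}_{C_i}$: since $C_i$ contains the non-tree edge $e_i$ and no other non-tree edge (by Lemma~\ref{important}), every bend factor at a vertex of $C_i$ other than $p_i,q_i$ is a ratio of two $1$'s, while the factors at $p_i$ and $q_i$ multiply to $\psi_{q_i}(p_i)/\psi_{p_i}(q_i) = f(C_i)$. Hence $f_v$ agrees with $f$ on the $\Z$-basis $\{C_i\}$ of $\mathcal{C}$, and so $f_v = f$ on all of $\mathcal{C}$.
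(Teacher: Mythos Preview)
Your proof is correct and follows essentially the same route as the paper: both define the homomorphism $v \mapsto f_v$, identify its kernel with the $\equiv$-class of the identity via Lemma~\ref{cycle lem} and Proposition~\ref{iso prop}, and establish surjectivity by building a~collection of skew-zigzag coefficients supported on the non-tree edges of a spanning tree, using Lemma~\ref{important}. Your surjectivity construction via neighbour weights $\psi_a$ and the ratio formula $v^a_{b,c}=\psi_a(b)/\psi_a(c)$ is a slightly cleaner packaging than the paper's case-by-case definition (the axioms of Definition~\ref{coeff defin} hold automatically and the cycle product telescopes), but it is the same idea and yields the same coefficients up to relabelling which endpoint of each $e_i$ carries the weight~$f(C_i)$.
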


\begin{proof} Def\/ine the map $\psi$ by
 \begin{gather*}
 \psi \colon \ {\rm SZC} \to H^1(\Gamma,\Bbbk^*), \qquad v \mapsto f_v.
 \end{gather*}
 It is clear that this map is a group homomorphism. Let $v,u \in {\rm SZC}$. Then, we have
 \begin{gather*}
 \psi(v)=\psi(u)
 \iff f_v=f_u
 \iff f_v(P)=f_u(P) \text{ for all } P \in \mathcal{C} \\
 \hphantom{ \psi(v)=\psi(u)}{}
 \iff v_P^{\rm cycle}=u_P^{\rm cycle} \text{ for all } P \in \mathcal{C}
 \iff v \equiv u.
 \end{gather*}
 Hence, ${\rm SZC}/\ker \psi={\rm SZC}/_{\equiv}$.

 Let us now prove that $\psi$ is surjective. By Lemma~\ref{important}, there exists a basis of cycles $\mathcal{B}=\{C_1, \ldots, C_{|E|-|V|+1}\}$ of $\mathcal{C}$ such that, for all $1\leq i \leq |E|-|V|+1$, there exists $(b_i \,|\, c_i) \in C_i$ such that $(b_i \,|\, c_i) \in C_k$ if and only if $k=i$.

 Let $f$ be a group homomorphism from $\mathcal{C}$ to $\Bbbk^*$. We def\/ine coef\/f\/icients as follows:
 \begin{gather*}
 v_{b,b}^{a} = 1 \text{ for all } \{a,b\} \in E,\\
 v_{b,c}^{a} = 1 \text{ for all } \{a,b\},\{a,c\} \in E,\qquad a \neq b_i \text{ for all } i=1, \ldots ,|E|-|V|+1.
 \end{gather*}
 Then, for all $i=1, \ldots ,|E|-|V|+1$ we def\/ine
 \begin{gather*}
 1/v_{c_i,a}^{b_i} = v_{a,c_i}^{b_i}  = f(C_i) \text{ for all } a\neq c_k,\ 1\leq k \leq |E|-|V|+1, \\
 v_{c_k,c_i}^{b_i}  = f(C_i)/f(C_k) \text{ for all } k \in \{1,\ldots,|E|-|V|+1\} \text{ such that } b_i=b_k,\\
 v_{a,c}^{b_i}  = 1 \text{ for all } \{b_i,a\},\{b_i,c\} \in E, a \neq c_i \neq c.
 \end{gather*}
 It is clear that the set $v=(v_{b,c}^a \,|\, \{a,b\},\{a,c\} \in E )$ is a collection of skew-zigzag coef\/f\/icients. Moreover, since $(b_i \,|\, c_i)$ is in $C_k$ if and only if $k=i$ for all $i=1, \ldots ,|E|-|V|+1$, it is also clear that $f_v(C_i)=f(C_i)$ for all $i=1, \ldots ,|E|-|V|+1$. Thus, we must have $\psi(v)=f_v=f$. Consequently, $\psi$~is surjective.

Since $\psi$ is surjective, the first isomorphism theorem implies ${\rm SZC}/_{\equiv} \cong H^1(\Gamma,\Bbbk^*)$. The result then follows using~\eqref{phi}.
\end{proof}

\begin{rem} \label{rem:HK-moduli-space-statement}
 In \cite[Section~4]{MR1872113}, the authors state that ``the moduli space of skew-zigzag algebras is naturally isomorphic to $H^1(\Gamma,\mathbb{C}^*)$''. In light of Theorem~\ref{thm 1}, we assume that the moduli space they had in mind was~$\Sigma/\sim$.
\end{rem}

Let $\Gamma=(V,E)$ be a graph. We say that a permutation $\sigma$ of the elements of $V$ is a \emph{graph automorphism} of $\Gamma$ if, for all $a,b \in V$, we have
\begin{gather*}
 \{a,b\} \in E \iff \{\sigma(a),\sigma(b)\} \in E.
\end{gather*}
The identity permutation is called the trivial graph automorphism. A graph is said to be \emph{asymmetric} if it admits only the trivial graph automorphism, otherwise it is said to be \emph{symmetric}.

Suppose $\sigma$ is a graph automorphism of $\Gamma$. For any path $P=(a_1 \,|\, \dots \,|\, a_n)$ we let $\sigma(P)=(\sigma(a_1) \,|\, \dots \,|\, \sigma(a_n))$. Notice that if $P$ is a cycle, then~$\sigma(P)$ is also a cycle. Moreover, if~$P_1$ and~$P_2$ are two paths, then it is clear that $\sigma(P_1P_2)=\sigma(P_1)\sigma(P_2)$. Thus, the map $\sigma$ induces an automorphism of the path algebra $\Bbbk D\Gamma$
\begin{gather*}
 \phi_{\sigma} \colon \ \Bbbk D\Gamma \to \Bbbk D\Gamma, \qquad P \mapsto \sigma(P).
\end{gather*}
Fix a collection of skew-zigzag coef\/f\/icients $v$. We def\/ine $(\sigma v)_{b,c}^a$ for all $\{a,b\},\{a,c\} \in E$ as follows:
\begin{gather*}
 (\sigma v)_{b,c}^a = v_{\sigma^{-1}(b),\sigma^{-1}(c)}^{\sigma^{-1}(a)}.
\end{gather*}
Then we def\/ine $\sigma v = ( (\sigma v)_{b,c}^a \,|\, \{a,b\},\{a,c\} \in E)$. The map $(\sigma, v) \mapsto \sigma v$ def\/ines an action of the group of graph automorphisms on the set of skew-zigzag coef\/f\/icients.
\details{
 For any two graph automorphisms $\sigma_1, \sigma_2$, we have
 \begin{gather*}
 (\sigma_1(\sigma_2 v))_{b,c}^a = (\sigma_2 v)_{\sigma_1^{-1}(b),\sigma_1^{-1}(c)}^{\sigma_1^{-1}(a)} = v_{\sigma_2^{-1}(\sigma_1^{-1}(b)),\sigma_2^{-1}(\sigma_1^{-1}(c))}^{\sigma_2^{-1}(\sigma_1^{-1}(a))} =
 v_{(\sigma_1\sigma_2)^{-1}(b),(\sigma_1\sigma_2)^{-1}(c)}^{(\sigma_1\sigma_2)^{-1}(a)} =
 (\sigma_1\sigma_2 v)_{b,c}^a.
 \end{gather*}
 Thus, $\sigma_1(\sigma_2 v)=(\sigma_1\sigma_2) v$.
}

It is straightforward to verify that $\sigma(I_v) = I_{\sigma v}$.\details{For $a,b,c \in V$ such that $a$ is connected to $b$ and $c$, we have
 \begin{gather*}
 \sigma((a \,|\, b \,|\, a) - v^a_{b,c} (a \,|\, c \,|\, a))
 = (\sigma(a) \,|\, \sigma(b) \,|\, \sigma(a)) - v^a_{b,c} (\sigma(a) \,|\, \sigma(c) \,|\, \sigma(a)) \\
 \hpahntom{\sigma((a \,|\, b \,|\, a) - v^a_{b,c} (a \,|\, c \,|\, a))}{} = (\sigma(a) \,|\, \sigma(b) \,|\, \sigma(a)) - (\sigma v)^{\sigma(a)}_{\sigma(b), \sigma(c)} (\sigma(a) \,|\, \sigma(c) \,|\, \sigma(a)).
 \end{gather*}}
Thus we obtain an isomorphism
\begin{gather} \label{sigma}
 \sigma \colon \ A_v(\Gamma) \to A_{\sigma v}(\Gamma), \qquad [P] \mapsto [\sigma(P)].
\end{gather}

Let
\begin{gather*}
 \psi \colon \ A_v(\Gamma) \to A_u(\Gamma)
\end{gather*}
be an isomorphism of graded algebras where $v$ and $u$ are two collections of skew-zigzag coef\/f\/i\-cients. Suppose that $V=\{a_1, \ldots, a_n\}$. Although it follows from more advance concepts, we will use an elementary approach to prove that vertices must be mapped to vertices. For all $1\leq i \leq n$, let $\psi([a_i]) = \sum\limits_{j=1} ^n \alpha_{ij}[a_j]$, where $\alpha_{ij} \in \Bbbk$ for all $1\leq i,j \leq n$. For any two vertices $a_i, a_k \in V$, $1\leq i,k \leq n$, $i\neq k$, we have
\begin{gather*}
 0=\psi(0)=\psi([a_i][a_k])=\left(\sum_{j=1} ^n \alpha_{ij}[a_j]\right)\left(\sum_{j=1} ^n \alpha_{kj}[a_j]\right) =
 \sum_{j=1} ^n \alpha_{ij}\alpha_{kj}[a_j].
\end{gather*}
Thus, for any $j=1,\ldots ,n$, at least one of $\alpha_{ij}$ or $\alpha_{kj}$ is 0. Hence, if $\alpha_{ij}\neq 0$ for some $1\leq i,j \leq n$, then $\alpha_{kj}=0$ for all $1\leq k \leq n$, $k\neq i$. Moreover, since $\psi$ is injective, $\psi([a_i]) \neq 0$ for all $i=1, \ldots, n$, and so there exists $j_i \in \{1,\ldots ,n\}$ such that $\alpha_{ij_i}\neq 0$ and thus $\alpha_{kj_i}= 0$ for all $1\leq k \leq n$, $k\neq i$. Therefore, for any $1\leq i,k \leq n$, $i\neq k$ we have $j_i \neq j_k$. As a result, we must have $\alpha_{ij}=0$ for all $1\leq i,j \leq n$, $j\neq j_i$. Consequently, we must have $\psi([a_i])=\alpha_{ij_i}[a_{j_i}]$ for some $j_i \in \{1,\ldots ,n\}$. Furthermore, notice that we have
 \begin{gather*}
 \phi([a_i])=\phi([a_i][a_i])=\phi([a_i])\phi([a_i])=\alpha_{ij_i}\alpha_{ij_i}[a_{j_i}]=\alpha_{ij_i}\phi([a_i]),
 \end{gather*}
for all $1\leq i \leq n$. Hence we must have $\alpha_{ij_i}=1$. Therefore, any isomorphism $\psi \colon A_v(\Gamma) \to A_u(\Gamma)$ induces a graph automorphism
 \begin{gather*}
 \sigma_{\psi} \colon \  V \to V, \qquad a_i \mapsto a_{j_i}.
 \end{gather*}
Then, we obtain the isomorphism
 \begin{gather*}
 \sigma_{\psi}^{-1} \circ \psi \colon  \ A_v(\Gamma) \to A_{\sigma^{-1}u}(\Gamma),
 \end{gather*}
Therefore
\begin{gather} \label{composition decomp}
 \psi = \sigma_{\psi} \circ \big(\sigma_{\psi}^{-1} \circ \psi\big),\qquad \text{where } \sigma_{\psi}^{-1} \circ \psi([a]) = [a] \text{ for all } a \in V.
\end{gather}

\begin{lem} \label{equiv}
 Let $\Gamma$ be a connected asymmetric graph with at least three vertices, and let~$v$ and~$u$ be two collections of skew-zigzag coefficients. Then $A_v(\Gamma)\cong A_u(\Gamma)$ as graded algebras if and only if $v \equiv u$.
\end{lem}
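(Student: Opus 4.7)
The plan is to handle the two directions separately, with the reverse direction being immediate from the previous setup and the forward direction requiring only a short argument that exploits asymmetry.

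For the implication $v \equiv u \Rightarrow A_v(\Gamma) \cong A_u(\Gamma)$, we simply unpack the definition of $\equiv$: by construction, $v \equiv u$ guarantees the existence of a graded algebra isomorphism $\phi \colon A_v(\Gamma) \to A_u(\Gamma)$ (one that, moreover, fixes vertices), so $A_v(\Gamma) \cong A_u(\Gamma)$ as graded algebras.

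For the converse, suppose $\psi \colon A_v(\Gamma) \to A_u(\Gamma)$ is any isomorphism of graded algebras. The key tool is the decomposition \eqref{composition decomp}, which was established just before the statement: every such $\psi$ factors as $\psi = \sigma_\psi \circ (\sigma_\psi^{-1} \circ \psi)$, where $\sigma_\psi$ is a graph automorphism of $\Gamma$ induced by $\psi$, and $\sigma_\psi^{-1} \circ \psi \colon A_v(\Gamma) \to A_{\sigma_\psi^{-1} u}(\Gamma)$ is a graded algebra isomorphism fixing the vertices. Since $\Gamma$ is asymmetric by hypothesis, the only graph automorphism available is the identity, so $\sigma_\psi = \mathrm{id}_V$. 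Consequently $\sigma_\psi^{-1} u = u$, and $\psi$ itself is an isomorphism $A_v(\Gamma) \to A_u(\Gamma)$ that fixes every vertex. By the definition of $\equiv$, this gives $v \equiv u$.

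The argument is essentially bookkeeping; there is no serious obstacle once \eqref{composition decomp} is in hand. The only subtle point to highlight carefully is that the identification $\sigma_\psi^{-1} u = u$ (and hence the target algebra of $\sigma_\psi^{-1} \circ \psi$ being $A_u(\Gamma)$ rather than some twisted version) really does rely on asymmetry; without it one could only conclude $v \equiv \sigma_\psi^{-1} u$ for some (possibly nontrivial) graph automorphism $\sigma_\psi$, which is precisely the statement needed for the more general Theorem~\ref{theo:moduli-arbitrary-isom}.
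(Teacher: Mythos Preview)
Your proof is correct and follows essentially the same route as the paper: both exploit that any graded isomorphism $A_v(\Gamma)\to A_u(\Gamma)$ induces a graph automorphism which, by asymmetry, must be the identity, so the isomorphism already fixes vertices and $v\equiv u$ by definition. The only cosmetic difference is that you invoke the decomposition~\eqref{composition decomp} explicitly, whereas the paper re-derives the graph automorphism $\sigma_\phi$ directly (and then, slightly redundantly, cites Lemma~\ref{cycle lem} rather than just the definition of~$\equiv$).
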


\begin{proof}
 Let $\Gamma$ be a connected asymmetric graph with at least three vertices, and $v$ and $u$ be two collections of skew-zigzag coef\/f\/icients. Suppose that we have an isomorphism $\phi \colon A_v(\Gamma)\to A_u(\Gamma)$. Then in particular the map
 \begin{gather*}
 \sigma_{\phi} \colon \ V \to V, \qquad a \mapsto s\left(\phi([a])\right),
 \end{gather*}
 is a graph automorphism. Since $\Gamma$ is asymmetric, we must have $\sigma_{\phi}(a)=a$ for all $a \in V$. Thus $\phi([a])=[a]$ for all $a \in V$. Hence $v \equiv u$ by Lemma~\ref{cycle lem}.

 The reverse implication follows from the def\/inition of the equivalence relation.
\end{proof}

Let $\Aut (\Gamma)$ be the group of graph automorphisms of $\Gamma$. We def\/ine a group action of $\Aut (\Gamma)$ on $H^1(\Gamma, \Bbbk^*)$ by
\begin{gather*}
 (\sigma f)(C)=f\big(\sigma^{-1}(C)\big),\qquad \sigma \in \Aut (\Gamma), \quad f \in H^1(\Gamma, \Bbbk^*),\quad C \in \mathcal{C}.
\end{gather*}
\details{
 For any $\sigma_1,\sigma_2 \in \Aut (\Gamma)$ and any $f\in H^1(\Gamma, \Bbbk^*)$ we have
 \begin{gather*}
 (\sigma_1(\sigma_2 f))(C)=(\sigma_2 f)\big(\sigma_1^{-1}(C)\big)=f\big(\sigma_2^{-1}\sigma_1^{-1}(C)\big)=f\big((\sigma_1\sigma_2)^{-1}(C)\big) =(\sigma_1\sigma_2f)(C)
 \end{gather*}
 for all $C \in \mathcal{C}$.
}

For any $\sigma \in \Aut (\Gamma)$ and any collection of skew-zigzag coef\/f\/icients $v$ we have an isomorphism given by~\eqref{sigma} $\sigma \colon A_v(\Gamma) \to A_{\sigma v}(\Gamma)$. It is clear that $A_v(\Gamma) \mapsto A_{\sigma v}(\Gamma)$ def\/ines a group action of $\Aut (\Gamma)$ on $\Sigma$ and that it preserves the equivalence relation~$\sim$.\details{ To show that $A_v(\Gamma) \mapsto A_{\sigma v}(\Gamma)$ preserves the equivalence relation $\sim$, let $v$ and $u$ be two collections of skew-zigzag coef\/f\/icients. Suppose $A_v(\Gamma) \sim A_u(\Gamma)$. Then there exists an isomorphism $\phi \colon A_v(\Gamma) \to A_u(\Gamma)$ that f\/ixes the vertices. So, we have
 \begin{gather*}
 A_{\sigma v}(\Gamma) \xrightarrow{\sigma^{-1}} A_v(\Gamma) \xrightarrow{\phi} A_u(\Gamma) \xrightarrow{\sigma} A_{\sigma u}(\Gamma),
 \end{gather*}
 where the map $\sigma \circ \phi \circ \sigma^{-1}$ is an isomorphism from $ A_{\sigma v}(\Gamma)$ to $ A_{\sigma u}(\Gamma)$ that f\/ixes the vertices. Hence $A_{\sigma v}(\Gamma) \sim A_{\sigma u}(\Gamma)$.

 Now assume that we have $A_{\sigma v}(\Gamma) \sim A_{\sigma u}(\Gamma)$ for some collections of skew-zigzag coef\/f\/icients and $\sigma \in \Aut(\Gamma)$. So there exists an isomorphism $\phi \colon A_{\sigma v}(\Gamma) \to A_{\sigma u}(\Gamma)$ that f\/ixes the vertices. Thus, we have
 \begin{gather*}
 A_v(\Gamma) \xrightarrow{\sigma} A_{\sigma v}(\Gamma) \xrightarrow{\phi} A_{\sigma u}(\Gamma) \xrightarrow{\sigma^{-1}} A_u(\Gamma),
 \end{gather*}
 where $\sigma^{-1} \circ \phi \circ \sigma$ is an isomorphism from $ A_v(\Gamma)$ to $ A_u(\Gamma)$ that f\/ixes the vertices. Therefore $A_v(\Gamma) \sim A_u(\Gamma)$. Consequently, the action preserves the equivalence relation $\sim$.
}
Therefore, we have an induced action of $\Aut (\Gamma)$ on $\Sigma/_{\sim}$, given by $X \mapsto \sigma X$ for all $X \in \Sigma/_{\sim}$, where $\sigma X = \{\sigma x \,|\, x \in X\}$.

\begin{lem} \label{lem 1}
 For any graph $\Gamma$ we have the following isomorphism of sets
 \begin{gather*}
 (\Sigma/_{\sim})/\Aut(\Gamma) \cong \Sigma/_{\cong}.
 \end{gather*}
\end{lem}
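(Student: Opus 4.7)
The plan is to exploit the obvious map induced by the containment of equivalence relations, together with the factorization~\eqref{composition decomp} already established in the text. Since every vertex-fixing graded isomorphism is in particular a graded isomorphism, the relation $\sim$ refines $\cong$, so the assignment $A_v(\Gamma) \mapsto [A_v(\Gamma)]_\cong$ descends to a well-defined surjective map
\[
 \bar\pi \colon \Sigma/_\sim \to \Sigma/_\cong, \qquad [A_v(\Gamma)]_\sim \mapsto [A_v(\Gamma)]_\cong.
\]

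Next I would check that $\bar\pi$ is constant on $\Aut(\Gamma)$-orbits. For any $\sigma \in \Aut(\Gamma)$ and any $v \in {\rm SZC}$, equation~\eqref{sigma} provides a graded algebra isomorphism $A_v(\Gamma) \to A_{\sigma v}(\Gamma)$, so
\[
 \bar\pi(\sigma \cdot [A_v(\Gamma)]_\sim) = [A_{\sigma v}(\Gamma)]_\cong = [A_v(\Gamma)]_\cong = \bar\pi([A_v(\Gamma)]_\sim).
\]
Therefore $\bar\pi$ factors through the $\Aut(\Gamma)$-action on $\Sigma/_\sim$, yielding a surjective map of sets
\[
 \tilde\pi \colon (\Sigma/_\sim)/\Aut(\Gamma) \to \Sigma/_\cong.
\]

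The main step is the injectivity of $\tilde\pi$. Suppose $[A_v(\Gamma)]_\cong = [A_u(\Gamma)]_\cong$, i.e., there is a graded algebra isomorphism $\psi \colon A_v(\Gamma) \to A_u(\Gamma)$. By the analysis preceding the statement, $\psi$ determines a graph automorphism $\sigma_\psi \in \Aut(\Gamma)$, and the decomposition~\eqref{composition decomp} gives $\psi = \sigma_\psi \circ (\sigma_\psi^{-1} \circ \psi)$, where $\sigma_\psi^{-1} \circ \psi \colon A_v(\Gamma) \to A_{\sigma_\psi^{-1} u}(\Gamma)$ is a vertex-fixing isomorphism. Hence $A_v(\Gamma) \sim A_{\sigma_\psi^{-1} u}(\Gamma)$, so in $\Sigma/_\sim$ we have
\[
 [A_v(\Gamma)]_\sim = [A_{\sigma_\psi^{-1} u}(\Gamma)]_\sim = \sigma_\psi^{-1} \cdot [A_u(\Gamma)]_\sim,
\]
showing that $[A_v(\Gamma)]_\sim$ and $[A_u(\Gamma)]_\sim$ lie in the same $\Aut(\Gamma)$-orbit. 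This yields injectivity of $\tilde\pi$, completing the proof.

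The only nontrivial input is the factorization~\eqref{composition decomp}, which has already been carried out before the lemma; the rest is a routine diagram chase on equivalence relations and orbit spaces, so I do not anticipate any substantial obstacle.
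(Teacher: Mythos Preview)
Your proof is correct and follows essentially the same approach as the paper: both define the natural map $\Sigma/_\sim \to \Sigma/_\cong$ induced by the containment of equivalence relations, and then use the isomorphism~\eqref{sigma} together with the decomposition~\eqref{composition decomp} to show that its fibers are exactly the $\Aut(\Gamma)$-orbits. Your presentation is organized slightly differently (you first verify constancy on orbits to get the induced map $\tilde\pi$, then prove injectivity), but the content is the same.
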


\begin{proof}
 Def\/ine
 \begin{gather*}
 \alpha \colon \  \Sigma/_{\sim} \to \Sigma/_{\cong},\qquad [A_v(\Gamma)]_{\sim} \mapsto [A_v(\Gamma)]_{\cong}.
 \end{gather*}
 Clearly this is a well-def\/ined surjective map. Moreover, notice that if~$v$ and~$u$ are two collections of skew-zigzag coef\/f\/icients, then we have
 \begin{gather*}
 \alpha([A_v(\Gamma)]_{\sim})=\alpha([A_u(\Gamma)]_{\sim}) \iff [A_v(\Gamma)]_{\cong}=[A_u(\Gamma)]_{\cong}\\
 \hphantom{\alpha([A_v(\Gamma)]_{\sim})=\alpha([A_u(\Gamma)]_{\sim})}{} \iff \text{there exists an isomorphism } \Phi \colon A_v(\Gamma) \to A_u(\Gamma).
 \end{gather*}
 Recall from \eqref{composition decomp} that every isomorphism between skew-zigzag algebras can be written as the composition of a graph automorphism and an isomorphism that f\/ixes the vertices. Thus, there exists $\sigma \in \Aut(\Gamma)$ and an isomorphism $\gamma$ that f\/ixes the vertices such that
 \begin{gather*}
 \Phi = \sigma \circ \gamma \colon \ A_v(\Gamma) \xrightarrow{\gamma} A_{\sigma^{-1}u}(\Gamma) \xrightarrow{\sigma} A_u(\Gamma).
 \end{gather*}
 Since $\gamma$ f\/ixes the vertices, we have $[A_v(\Gamma)]_{\sim}=[A_{\sigma^{-1}u}(\Gamma)]_{\sim}$ and so,
 \begin{gather*}
 \sigma[A_v(\Gamma)]_{\sim}= \sigma[A_{\sigma^{-1}u}(\Gamma)]_{\sim}=[A_u(\Gamma)]_{\sim}.
 \end{gather*}
 Now, let $v$ and $u$ be two collections of skew-zigzag coef\/f\/icients such that there exists $\sigma \in \Aut(\Gamma)$ such that $\sigma[A_v(\Gamma)]_{\sim}=[A_u(\Gamma)]_{\sim}$. Then, since $A_{\sigma^{-1}u}(\Gamma) \in [A_v(\Gamma)]_{\sim}$, there exists an isomor\-phism~$\gamma$ that f\/ixes the vertices such that we have the following
 \begin{gather*}
 A_v(\Gamma) \xrightarrow{\gamma} A_{\sigma^{-1}u}(\Gamma) \xrightarrow{\sigma} A_u(\Gamma).
 \end{gather*}
 Since $\sigma \circ \gamma$ is an isomorphism, we must have $[A_v(\Gamma)]_{\cong}=[A_u(\Gamma)]_{\cong}$ and thus, $\alpha([A_v(\Gamma)]_{\sim})=\alpha([A_u(\Gamma)]_{\sim})$.

 Consequently, for any $X_1, X_2 \in \Sigma/_{\sim}$, we have
 \begin{gather*}
 \alpha(X_1)=\alpha(X_2) \iff \text{ there exists } \sigma \in \Aut(\Gamma) \text{ such that } \sigma X_1 = X_2.
 \end{gather*}
 Hence, we f\/inally obtain $(\Sigma/_{\sim})/\Aut(\Gamma) \cong \Sigma/_{\cong}$.
\end{proof}

\begin{theo} \label{theo:moduli-arbitrary-isom}
 For any graph $\Gamma$ we have the following isomorphism of sets
 \begin{gather*}
 \Sigma/_{\cong} \cong H^1(\Gamma, \Bbbk^*)/\Aut(\Gamma).
 \end{gather*}
\end{theo}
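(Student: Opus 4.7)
The plan is to combine Theorem~\ref{thm 1} with Lemma~\ref{lem 1} by showing that the isomorphism of Theorem~\ref{thm 1} intertwines the two $\Aut(\Gamma)$-actions already in play. More precisely, Theorem~\ref{thm 1} provides a bijection $\Sigma/_{\sim} \xrightarrow{\cong} H^1(\Gamma,\Bbbk^*)$ arising from $[A_v(\Gamma)]_{\sim} \mapsto [v] \mapsto f_v$, while Lemma~\ref{lem 1} identifies $\Sigma/_{\cong}$ with $(\Sigma/_{\sim})/\Aut(\Gamma)$. The theorem will follow immediately once equivariance is verified, since an $\Aut(\Gamma)$-equivariant bijection descends to a bijection of quotients.

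The main step, then, is to check that the composition $[A_v(\Gamma)]_\sim \mapsto f_v$ is $\Aut(\Gamma)$-equivariant. By construction, the action on $\Sigma/_{\sim}$ sends $[A_v(\Gamma)]_\sim$ to $[A_{\sigma v}(\Gamma)]_\sim$, so I need to verify that $f_{\sigma v} = \sigma f_v$ for every $\sigma \in \Aut(\Gamma)$ and every collection $v$ of skew-zigzag coefficients. For a cycle $C = (a_1 \,|\, \ldots \,|\, a_n)$ in $D\Gamma$, I will compute directly from the definitions:
\begin{gather*}
 f_{\sigma v}(C) = (\sigma v)_{C}^{\rm cycle} = (\sigma v)_{a_{n-1},a_2}^{a_1} \prod_{i=2}^{n-1} (\sigma v)_{a_{i-1},a_{i+1}}^{a_i} = v_{\sigma^{-1}(a_{n-1}),\sigma^{-1}(a_2)}^{\sigma^{-1}(a_1)} \prod_{i=2}^{n-1} v_{\sigma^{-1}(a_{i-1}),\sigma^{-1}(a_{i+1})}^{\sigma^{-1}(a_i)},
\end{gather*}
which is precisely $v_{\sigma^{-1}(C)}^{\rm cycle} = f_v(\sigma^{-1}(C)) = (\sigma f_v)(C)$. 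Since the cycle products determine $f_v$ on the generating cycles (and the equality extends by multiplicativity of group homomorphisms to all of $\mathcal{C}$), this establishes $f_{\sigma v} = \sigma f_v$.

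Having verified equivariance, the induced map on orbits
\begin{gather*}
 (\Sigma/_{\sim})/\Aut(\Gamma) \to H^1(\Gamma,\Bbbk^*)/\Aut(\Gamma)
\end{gather*}
is a bijection of sets, and combining with the identification $\Sigma/_{\cong} \cong (\Sigma/_{\sim})/\Aut(\Gamma)$ from Lemma~\ref{lem 1} yields the desired isomorphism $\Sigma/_{\cong} \cong H^1(\Gamma,\Bbbk^*)/\Aut(\Gamma)$. The step I expect to require the most care is the equivariance check, since one must keep track of which positions in the cycle contribute which coefficients under the substitution $a_i \leftrightarrow \sigma^{-1}(a_i)$; but once set up correctly, the verification is a direct unwinding of the definitions of $\sigma v$, $v^{\rm cycle}$, and the action $(\sigma f)(C) = f(\sigma^{-1}(C))$.
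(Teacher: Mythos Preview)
Your proposal is correct and follows essentially the same route as the paper: both establish the theorem by showing that the bijection $[A_v(\Gamma)]_\sim \mapsto f_v$ from Theorem~\ref{thm 1} is $\Aut(\Gamma)$-equivariant (via the direct computation $f_{\sigma v}(C) = v^{\rm cycle}_{\sigma^{-1}(C)} = (\sigma f_v)(C)$ on cycles), and then combine the resulting bijection of orbit spaces with Lemma~\ref{lem 1}.
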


\begin{proof}
 By Theorem \ref{thm 1}, we know that the map
 \begin{gather*}
 \psi \circ \phi \colon \ \Sigma/_{\sim} \to H^1(\Gamma, \Bbbk^*), \qquad [A_v(\Gamma)]_{\sim} \mapsto f_v,
 \end{gather*}
 is an isomorphism, where the map $\phi$ is as in \eqref{phi} and $\psi \colon {\rm SZC}/_{\equiv} \to H^1(\Gamma, \Bbbk^*),$ $[v] \mapsto f_v$. Let us now prove that it preserves the $\Aut (\Gamma)$-action, i.e., for all $\sigma \in \Aut (\Gamma)$ and $v \in {\rm SZC}$, we have $(\psi \circ \phi)(\sigma [A_v(\Gamma)]_{\sim})=\sigma (\psi \circ \phi)([A_v(\Gamma)]_{\sim})$. We have
 \begin{gather*}
 (\psi \circ \phi)(\sigma [A_v(\Gamma)]_{\sim})= \psi([\sigma v])=f_{\sigma v},
 \end{gather*}
 and
 \begin{gather*}
 \sigma (\psi \circ \phi)([A_v(\Gamma)]_{\sim})= \sigma (\psi([v]))=\sigma (f_v).
 \end{gather*}
 Let $C=(a_1 \,|\, \ldots \,|\, a_n) \in C$ be a cycle. Then,
 \begin{gather*}
 \sigma (f_v)(C)=f_v\big(\sigma^{-1}(C)\big)= f_v\big( \big(\sigma^{-1}(a_1) \,|\, \ldots \,|\, \sigma^{-1}(a_n)\big) \big)\\
 \hphantom{\sigma (f_v)(C)=f_v\big(\sigma^{-1}(C)\big)}{}
 = v_{\sigma^{-1}(a_{n-1}),\sigma^{-1}(a_2)}^{\sigma^{-1}(a_1)}\cdots v_{\sigma^{-1}(a_{n-2}),\sigma^{-1}(a_n)}^{\sigma^{-1}(a_{n-1})} = f_{\sigma v}(C).
 \end{gather*}
 Hence $\sigma (f_v)=f_{\sigma v}$. Consequently, we must have $(\psi \circ \phi)(\sigma [A_v(\Gamma)]_{\sim})=\sigma (\psi \circ \phi)([A_v(\Gamma)]_{\sim})$. So $\psi \circ \phi$ is an $\Aut (\Gamma)$-set isomorphism and thus,
 \begin{gather*}
 (\Sigma /_{\sim})/\Aut(\Gamma) \cong H^1(\Gamma,\Bbbk^*)/\Aut(\Gamma).
 \end{gather*}
 Lemma \ref{lem 1} then yields $\Sigma/_{\cong} \cong H^1(\Gamma,\Bbbk^*)/\Aut(\Gamma)$.
\end{proof}

In \cite[Section~4]{MR1872113} the authors state the following result without proof.

\begin{cor}
 If $\Gamma$ is a tree then all of its skew-zigzag algebras are isomorphic.
\end{cor}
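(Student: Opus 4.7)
The plan is to invoke Theorem~\ref{theo:moduli-arbitrary-isom}, which identifies $\Sigma/_{\cong}$ with $H^1(\Gamma,\Bbbk^*)/\Aut(\Gamma)$, and then show that this quotient is trivial for any tree. The essential observation is that a tree has no cycles, so the space of cycles $\mathcal{C}$ vanishes.

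First I would dispose of the degenerate cases. If $\vert V\vert = 1$ or $\vert V\vert = 2$, then the definition of $A_v(\Gamma)$ in Definition~\ref{skew} involves no skew-zigzag coefficients at all, so there is literally only one algebra. Thus we may assume $\vert V\vert \geq 3$.

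For the main case, since $\Gamma$ is a tree we have $\vert E\vert = \vert V\vert - 1$, so Lemma~\ref{dimension} gives
\begin{gather*}
 \mathrm{rank}(\mathcal{C}) = \vert E\vert - \vert V\vert + 1 = 0.
\end{gather*}
Because $\mathcal{C}$ is a free $\Z$-module, this forces $\mathcal{C}=0$. Consequently,
\begin{gather*}
 H^1(\Gamma,\Bbbk^*) = \Hom_{\mathrm{group}}(\mathcal{C},\Bbbk^*) = \{1\},
\end{gather*}
is the trivial group, and so $H^1(\Gamma,\Bbbk^*)/\Aut(\Gamma)$ is a one-element set. By Theorem~\ref{theo:moduli-arbitrary-isom}, $\Sigma/_\cong$ is a singleton, which is exactly the statement that all skew-zigzag algebras of $\Gamma$ are isomorphic.

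There is no real obstacle here; the work has all been done in the preceding theorem. The only thing to be slightly careful about is making sure the small-vertex cases are mentioned, since Theorem~\ref{theo:moduli-arbitrary-isom} was proved using Lemma~\ref{cycle lem} and Proposition~\ref{iso prop}, both of which explicitly assume $\vert V\vert \geq 3$. Once those edge cases are dispatched by inspection of Definition~\ref{skew}, the corollary follows immediately from combining Theorem~\ref{theo:moduli-arbitrary-isom} with the rank computation of Lemma~\ref{dimension}.
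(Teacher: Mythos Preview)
Your proof is correct and follows the same approach as the paper, which simply says the result follows immediately from Theorem~\ref{theo:moduli-arbitrary-isom} and the fact that trees have trivial graph cohomology. You have spelled out the reason $H^1(\Gamma,\Bbbk^*)$ vanishes via Lemma~\ref{dimension} and have been more careful about the small-vertex cases, but the argument is essentially identical.
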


\begin{proof}
 The follows immediately from Theorem~\ref{theo:moduli-arbitrary-isom} and the fact that trees have trivial graph cohomology.
\end{proof}

\begin{eg}
 Consider the graph $\Gamma=(V,E)$ given by $V=\{a,b,c\}$ and $E=\{\{a,b\},\{b,c\}$, $\{a,c\}\}$. The associated double graph is
\begin{center}
 \begin{tikzpicture}
 \coordinate (x) at (0,0);
 \coordinate (y) at (2,0);
 \coordinate (z) at (1,1.732);

 \draw [->-] (x) to [out=75,in=210] (z);
 \draw [->-] (z) to [out=255,in=45] (x);
 \draw [->-] (x) to [out=15,in=165] (y);
 \draw [->-] (y) to [out=195,in=-15] (x);
 \draw [->-] (z) to [out=-30,in=105] (y);
 \draw [->-] (y) to [out=135,in=285] (z);

 \filldraw[black] (0,0) circle (2pt);
 \filldraw[black] (2,0) circle (2pt);
 \filldraw[black] (1,1.732) circle (2pt);

 \node at (-0.25,0) {$a$};
 \node at (1,1.982) {$b$};
 \node at (2.25,0) {$c$};
 \end{tikzpicture}
\end{center}
 By Proposition \ref{iso prop}, if $u,v \in {\rm SZC}$, then $A_u(\Gamma)\sim A_v(\Gamma)$ if and only if $u^{\rm cycle}_P = v^{\rm cycle}_P$ for every cycle $P$. Furthermore, recall that by \eqref{composition decomp}, if $u,v \in {\rm SZC}$ are such that $A_v(\Gamma)\cong A_u(\Gamma)$ via an isomorphism $\phi$, then $\phi$ can be written as the composition of a graph automorphism $\sigma$ with an isomorphism $\gamma$ that f\/ixes the vertices:
 \begin{gather*}
 \sigma \circ \gamma \colon \  A_v(\Gamma) \xrightarrow{\gamma} A_{\sigma^{-1}u}(\Gamma) \xrightarrow{\sigma} A_u(\Gamma).
 \end{gather*}
 Thus, if $P$ is a cycle, then $v^{\rm cycle}_P=u^{\rm cycle}_{\sigma^{-1}(P)}$. Conversely, if there exists $\sigma \in \Aut (\Gamma)$, such that $v^{\rm cycle}_P=u^{\rm cycle}_{\sigma^{-1}(P)}$ for every cycle $P$, then we have $A_v(\Gamma)\cong A_{\sigma^{-1}u}(\Gamma)$ by Proposition~\ref{iso prop} and $A_{\sigma^{-1}u}(\Gamma) \cong A_u(\Gamma)$ by \eqref{sigma}. Thus, $A_v(\Gamma)\cong A_u(\Gamma)$. Consequently, $A_v(\Gamma)\cong A_u(\Gamma)$ if and only if there exists some $\sigma \in \Aut(\Gamma)$ such that $v_P^\text{cycle} = u_{\sigma^{-1}(P)}^\text{cycle}$ for all cycles $P$. Therefore, in order to compute $\Sigma/_\sim$ and $\Sigma/_{\cong }$, it suf\/f\/ices to consider the cycle products of $u$ and $v$ ($u,v \in {\rm SZC}$) along every cycle. 
 
 It is clear that the $C=(a \,|\, b \,|\, c \,|\, a)$ yields a basis for the cycle space $\mathcal{C}$. So, if $u,v \in {\rm SZC}$, then $A_u(\Gamma)\sim A_v(\Gamma)$ if and only if $u^{\rm cycle}_C = v^{\rm cycle}_C$ and $A_u(\Gamma) \cong A_v(\Gamma) \iff u^{\rm cycle}_{C}=v^{\rm cycle}_{C}$ or $u^{\rm cycle}_{C^*}=v^{\rm cycle}_{C}$.
 
 Let $\Bbbk =\C$ and $xRy \iff x=y \text{ or } x^{-1}=y$ ($x,y \in \C^*$). Consider the maps
 \begin{gather*}
 \phi \colon \ \Sigma/_\sim \to \C^*, \qquad [A_v(\Gamma)]_\sim \mapsto v^{\rm cycle}_C, \qquad \psi \colon \ \Sigma/_{\cong} \to \C^*/R, \qquad [A_v(\Gamma)]_{\cong} \mapsto v^{\rm cycle}_C.
 \end{gather*}
 These maps are well-def\/ined and injective by the above remark. Surjectivity follows from the fact that if $x \in \C^*$, then the following is a collection of skew-zigzag coef\/f\/icients:
 \begin{gather*}
 v_{b,b}^a=v_{c,c}^a=v_{a,a}^b=v_{c,c}^b= v_{a,a}^c=v_{b,b}^c=v_{b,c}^a=v_{c,b}^a=v_{a,c}^b=v_{c,a}^b=1, \\ v_{a,b}^c=x, \qquad v_{b,a}^c=x^{-1}.
 \end{gather*}
 Thus, $\Sigma/_\sim \cong \C^*$, whereas $\Sigma/_{\cong} \cong \C^*/R$. Note that, $\C^* \ncong \C^*/R$ since, in $\C^*/R$, every element is its own inverse which is not true in $\C^*$. 
 
 \details{It is straightforward to calculate that the following is a collection of skew-zigzag coef\/f\/icients:
 \begin{gather*}
 v_{b,b}^a=v_{c,c}^a=v_{a,a}^b=v_{c,c}^b= v_{a,a}^c=v_{b,b}^c=v_{b,c}^a=v_{c,b}^a=v_{a,c}^b=v_{c,a}^b=1, \\
  v_{a,b}^c=2, \qquad v_{b,a}^c=1/2.
 \end{gather*}
 Since $v^{\rm cycle}_C= (v^{-1})^{\rm cycle}_{C^*}$, we have $A_v(\Gamma)\cong A_{v^{-1}}(\Gamma)$. However, $v^{\rm cycle}_C \neq (v^{-1})^{\rm cycle}_{C}$ and so $A_v(\Gamma)\not \sim A_{v^{-1}}(\Gamma)$.}
\end{eg}

\section{Other constructions of some skew-zigzag algebras} \label{sec:literature}

We conclude with a discussion of other constructions of certain skew-zigzag algebras that have appeared in the literature~\cite{MR2988902,COM:9168417}.

Let $\Gamma = (V,E)$ be a connected graph. We call $\Omega= (\epsilon_{a,b} \in \Bbbk^* \,|\, a,b \in V)$ a collection of \emph{orientation coefficients} if, for any pair of vertices $a,b \in V$, we have $\epsilon_{a,b}=0$ if $\{a,b\} \notin E$ and $\epsilon_{a,b}=-\epsilon_{b,a}$ if $\{a,b\} \in E$. If $\Gamma$ has at least 3 vertices, we def\/ine the algebra $B^{\Gamma}_{\Omega}$ to be the quotient algebra of the path algebra of $D\Gamma$ by the two sided ideal $I_{\Omega}$ generated by elements of the form
\begin{itemize}\itemsep=0pt
 \item $(a \,|\, b \,|\, c)$ for $\{a,b\},\{b,c\} \in E$ and $a \neq c$, and
 \item $\epsilon_{a,b}(a \,|\, b \,|\, a) - \epsilon_{a,c}(a \,|\, c \,|\, a)$ such that $a$ is connected to both $b$ and $c$.
\end{itemize}

\begin{lem} \label{ep to u}
 Let $\Gamma$ be a connected graph. For any collection of orientation coefficients, $\Omega$, there exists a collection of skew-zigzag coefficients, $v$, such that $B^{\Gamma}_{\Omega} = A_v(\Gamma)$.
\end{lem}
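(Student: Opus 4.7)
The plan is to construct the skew-zigzag coefficients directly from the orientation coefficients by taking suitable ratios. Specifically, given $\Omega = (\epsilon_{a,b})$, I would define
\begin{gather*}
 v^a_{b,c} \coloneqq \frac{\epsilon_{a,c}}{\epsilon_{a,b}}
\end{gather*}
for every pair of edges $\{a,b\}, \{a,c\} \in E$. The assumption that the $\epsilon_{a,b}$ lie in $\Bbbk^*$ makes this well-defined.

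First I would verify that $v$ is indeed a collection of skew-zigzag coefficients by checking the three conditions of Definition~\ref{coeff defin}. The first condition $v^a_{b,b} = \epsilon_{a,b}/\epsilon_{a,b} = 1$ is immediate. The second condition $v^a_{b,c} v^a_{c,b} = 1$ is also immediate from cancellation. The third condition $v^a_{b,c} v^a_{c,d} v^a_{d,b} = 1$ follows by telescoping the three ratios. Thus $v$ defines a legitimate collection of skew-zigzag coefficients, and in particular $A_v(\Gamma)$ makes sense.

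Next I would show that the two ideals $I_\Omega$ and $I_v$ inside the path algebra $\Bbbk D\Gamma$ coincide, which will yield $B^\Gamma_\Omega = A_v(\Gamma)$ as quotients. The generators of the form $(a\,|\,b\,|\,c)$ with $a \ne c$ appear in both sets of generators, so they contribute the same subideal. For the remaining generators, observe that
\begin{gather*}
 \epsilon_{a,b}(a\,|\,b\,|\,a) - \epsilon_{a,c}(a\,|\,c\,|\,a) = \epsilon_{a,b}\bigl[(a\,|\,b\,|\,a) - v^a_{b,c}(a\,|\,c\,|\,a)\bigr].
\end{gather*}
Since $\epsilon_{a,b} \in \Bbbk^*$, the element on the left is a scalar multiple of a generator of $I_v$, and conversely the generator of $I_v$ is a scalar multiple of an element of $I_\Omega$. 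Hence the two generating sets span the same ideal, which gives $I_\Omega = I_v$ and therefore $B^\Gamma_\Omega = A_v(\Gamma)$.

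There is no real obstacle in this argument; it is essentially a dictionary between the two presentations. The only mildly careful point is ensuring that the two defining presentations use generators that are scalar multiples of one another, which is transparent once the ratio definition of $v$ is written down. (Note also that the case of fewer than three vertices is not addressed by the definition of $B^\Gamma_\Omega$, so we may restrict to the case $|V| \geq 3$.)
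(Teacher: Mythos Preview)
Your proof is correct and follows essentially the same approach as the paper: define $v^a_{b,c} = \epsilon_{a,c}/\epsilon_{a,b}$, verify the three axioms of Definition~\ref{coeff defin} by direct cancellation, and observe that $I_\Omega = I_v$. Your argument is in fact slightly more explicit than the paper's, since you spell out why the two generating sets yield the same ideal.
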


\begin{proof}
 Let $\Omega = (\epsilon_{a,b})$ be a collection of orientation coef\/f\/icients and set $v_{b,c}^{a} = \epsilon_{a,c}/\epsilon_{a,b}$ for all $\{a,b\},\{a,c\} \in E$. For any $\{a,b\},\{a,c\},\{a,d\} \in E$ we have
 \begin{gather*}
 v_{b,b}^{a}=\frac{\epsilon_{a,b}}{\epsilon_{a,b}}=1,\qquad
 v_{b,c}^{a}v_{c,b}^{a}=\frac{\epsilon_{a,c}}{\epsilon_{a,b}}\frac{\epsilon_{a,b}}{\epsilon_{a,c}}=1, \qquad
 \text{and} \qquad
 v_{b,c}^{a}v_{c,d}^{a}v_{d, b}^{a}=\frac{\epsilon_{a,c}}{\epsilon_{a,b}}\frac{\epsilon_{a,d}}{\epsilon_{a,c}}\frac{\epsilon_{a,b}}{\epsilon_{a,d}}
 =1.
 \end{gather*}
 Thus the set $v=(v_{b,c}^{a} \,|\, \{a,b\},\{a,c\} \in E )$ is a collection of skew-zigzag coef\/f\/icients. It is clear that $I_{\Omega}=I_v$. Thus, we have $B^{\Gamma}_{\epsilon}=A_v(\Gamma)$.
\end{proof}

\begin{defin}[orientation]
 Let $\Gamma = (V,E)$ be a connected graph and let $D\Gamma=(V,E')$ be its double graph. A set $\epsilon \subseteq E'$ is said to be an \emph{orientation} of $D\Gamma$ if, for every $\{a,b\} \in E$, exactly one directed edge in $D\Gamma$ between $a$ and $b$ is in $\epsilon$.
\end{defin}

In \cite[Section~2.1, p.~109]{COM:9168417} the authors f\/ix an orientation $\epsilon$ of $D\Gamma$. Then, they def\/ine orientation coef\/f\/icients $\Omega = (\epsilon_{a,b} \,|\, a,b \in V)$ as follows:
\begin{gather} \label{ep orientation}
 \epsilon_{a,b} =
 \begin{cases}
 1 & \text{if } (a \,|\, b) \in \epsilon, \\
 -1 & \text{if } (b \,|\, a) \in \epsilon, \\
 0 & \text{if $a$ and $a$ are not connected.}
 \end{cases}
\end{gather}
Notice that $\epsilon_{a,b}=-\epsilon_{b,a}$. Thus, by Lemma \ref{ep to u}, we have $B_{\Omega}^{\Gamma} = A_v(\Gamma)$, where
\begin{gather*}
 v=\left(v^a_{b,c}= \frac{\epsilon_{a,c}}{\epsilon_{a,b}} \,|\, \{a,b\},\{a,c\} \in E\right).
\end{gather*}
However, the following proposition shows that the converse of Lemma~\ref{ep to u} is false. Thus, the alternate def\/inition of skew-zigzag algebras in terms of orientation coef\/f\/icients is more restrictive.

\begin{prop} \label{not iso}
 If $\Gamma$ is not a bipartite graph, then $A(\Gamma)$ is not isomorphic to $B_\Omega^\Gamma$ for any collection of orientation coefficients~$\Omega$.
\end{prop}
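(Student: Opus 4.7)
By Lemma~\ref{ep to u}, $B_\Omega^\Gamma = A_v(\Gamma)$ with $v_{b,c}^a := \epsilon_{a,c}/\epsilon_{a,b}$, while $A(\Gamma) = A_u(\Gamma)$ with $u_{b,c}^a \equiv 1$. I plan to distinguish these two algebras via their cycle-product invariants and then invoke Theorem~\ref{theo:moduli-arbitrary-isom}. Observe first that $f_u \equiv 1 \in H^1(\Gamma,\Bbbk^*)$ is fixed by $\Aut(\Gamma)$, so its $\Aut(\Gamma)$-orbit is a singleton. Hence if $A(\Gamma) \cong B_\Omega^\Gamma$ as graded algebras, Theorem~\ref{theo:moduli-arbitrary-isom} forces $f_v = f_u \equiv 1$, which amounts to $v^{\rm cycle}_C = 1$ for every cycle $C$.

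The core calculation is to show that $v^{\rm cycle}_C = (-1)^{\ell(C)}$ for any cycle $C = (a_1 \,|\, \dots \,|\, a_n)$ with $a_n = a_1$. Substituting the definition of $v$ gives
\begin{gather*}
 v^{\rm cycle}_C = \frac{\epsilon_{a_1,a_2}}{\epsilon_{a_1,a_{n-1}}} \prod_{i=2}^{n-1} \frac{\epsilon_{a_i,a_{i+1}}}{\epsilon_{a_i,a_{i-1}}},
\end{gather*}
whose numerator collects into $\prod_{i=1}^{n-1}\epsilon_{a_i,a_{i+1}}$. Applying the antisymmetry $\epsilon_{x,y} = -\epsilon_{y,x}$ to each of the $n-1$ denominator factors and then using $a_1 = a_n$ to reindex, the denominator rearranges to $(-1)^{n-1}\prod_{i=1}^{n-1}\epsilon_{a_i,a_{i+1}}$, so $v^{\rm cycle}_C = (-1)^{\ell(C)}$.

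Since $\Gamma$ is not bipartite, it contains a simple cycle $C$ of odd length, for which $v^{\rm cycle}_C = -1 \ne 1 = u^{\rm cycle}_C$ in $\Bbbk$ (assuming the characteristic of $\Bbbk$ is not $2$, compatible with the standing hypothesis on $\Bbbk$). This contradicts $f_v \equiv 1$, completing the argument. The main obstacle is the sign bookkeeping in the cycle-product identity; once it is in hand, the moduli description of Theorem~\ref{theo:moduli-arbitrary-isom} finishes the proof at once. A subtle point worth flagging is that Proposition~\ref{not iso} refers to isomorphism of algebras, not necessarily of graded algebras; however, the path-length grading on $A_v(\Gamma)$ coincides with its Jacobson radical filtration --- $\rad A_v(\Gamma) = (A_v(\Gamma))_1 \oplus (A_v(\Gamma))_2$ and $(\rad A_v(\Gamma))^2 = (A_v(\Gamma))_2$ --- so any algebra isomorphism induces a graded one on the (canonically defined) associated graded, which equals $A_v(\Gamma)$ itself, reducing to the graded case.
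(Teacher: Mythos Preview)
Your proof is correct and takes a genuinely different route from the paper's. The paper argues directly: assuming a graded isomorphism $\phi\colon A(\Gamma)\to B_\Omega^\Gamma$, it writes $\phi([a\,|\,b])=\alpha_{a,b}[x_a\,|\,x_b]$, derives the relation $\alpha_{a,c}\alpha_{c,a}/(\alpha_{a,b}\alpha_{b,a})=\epsilon_{x_a,x_c}/\epsilon_{x_a,x_b}$ from $[a\,|\,b\,|\,a]=[a\,|\,c\,|\,a]$ in $A(\Gamma)$, and then telescopes this identity around an odd cycle to obtain $1=(-1)^{n-1}=-1$. You instead translate $B_\Omega^\Gamma$ into $A_v(\Gamma)$ via Lemma~\ref{ep to u}, compute the cycle invariant $v_C^{\rm cycle}=(-1)^{\ell(C)}$ once and for all, and then invoke the moduli description of Theorem~\ref{theo:moduli-arbitrary-isom} together with the observation that the trivial cohomology class $f_u\equiv 1$ has a singleton $\Aut(\Gamma)$-orbit.

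Your approach is more conceptual and showcases the moduli machinery of Section~\ref{sec:moduli}; the telescoping in the paper's proof is really a hands-on version of your cycle-product identity combined with (the proof of) Lemma~\ref{cycle lem}. On the other hand, the paper's argument is self-contained and avoids Proposition~\ref{iso prop} and Theorem~\ref{theo:moduli-arbitrary-isom}, hence does not require the Section~\ref{sec:moduli} standing hypothesis that $\Bbbk$ contain square roots; it only needs $\operatorname{char}\Bbbk\neq 2$. Your remark that the radical filtration recovers the path-length grading, so that an ungraded isomorphism yields a graded one on the associated graded, is a useful addition that the paper does not make explicit.
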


\begin{proof}
 Suppose that $\Gamma$ is not a bipartite graph and that there exists an isomorphism
 \begin{gather*}
 \phi \colon \ A(\Gamma) \to B^{\Gamma}_{\Omega}
 \end{gather*}
 for some collection of orientation coef\/f\/icients $\Omega$. For all $a \in V$ let $\phi([a])=[x_a]$. So, for all \mbox{$\{a,b\} \in E$} we have $\phi([a \,|\, b])=\alpha_{a,b}[x_a \,|\, x_b]$ for some $\alpha_{a,b} \in \Bbbk$. Consequently, for any $\{a,b\}$, $\{a,c\} \in E$ we have
 \begin{gather*}
 \alpha_{a,b}\alpha_{b,a}[x_a \,|\, x_b \,|\, x_a]=\phi([a \,|\, b \,|\, a])
 =\phi([a \,|\, c \,|\, a]) = \alpha_{a,c}\alpha_{c,a}[x_a \,|\, x_c \,|\, x_a].
 \end{gather*}
 Therefore,
 \begin{gather} \label{epsilons1}
 \frac{\alpha_{a,c}\alpha_{c,a}}{\alpha_{a,b}\alpha_{b,a}}=\frac{\epsilon_{x_a,x_c}}{\epsilon_{x_a,x_b}}.
 \end{gather}
 By \cite[Proposition~1.6.1]{MR2744811}, since $\Gamma$ is not bipartite, it contains a~cycle of odd length. Thus, there is a~cycle $C=(a_1 \,|\, \ldots \,|\, a_n)$ in $D\Gamma$ with $n$ even. So, \eqref{epsilons1} yields
 \begin{gather}
 1=\frac{\alpha_{a_1,a_2}\alpha_{a_2,a_1}}{\alpha_{a_1,a_{n-1}}\alpha_{a_{n-1},a_1}} \frac{\alpha_{a_2,a_3}\alpha_{a_3,a_2}}{\alpha_{a_2,a_{1}}\alpha_{a_1,a_{2}}} \cdots \frac{\alpha_{a_{n-1},a_1}\alpha_{a_1,a_{n-1}}}{\alpha_{a_{n-1},a_{n-2}}\alpha_{a_{n-2},a_{n-1}}}\nonumber\\
 \hphantom{1}{}
 = \frac{\epsilon_{x_{a_1},x_{a_2}}}{\epsilon_{x_{a_1},x_{a_{n-1}}}} \frac{\epsilon_{x_{a_2},x_{a_3}}}{\epsilon_{x_{a_2},x_{a_{1}}}} \cdots \frac{\epsilon_{x_{a_{n-1}},x_{a_1}}}{\epsilon_{x_{a_{n-1}},x_{a_{n-2}}}}.\label{cycle epsilon}
 \end{gather}
 Since $\{a_i,a_{i+1}\} \in E$ for all $1\leq i \leq n-1$, $\epsilon_{x_{a_j},x_{a_k}}\neq 0$ for $1\leq j, k, \leq n-1$, $j \neq k$. Moreover, we know that $\epsilon_{a,b}=-\epsilon_{b,a}$ for all $a,b \in V$. Consequently, \eqref{cycle epsilon} yields
 \begin{gather*}
 1=(-1)^{n-1} = -1.
 \end{gather*}
 This contradiction implies that $A_u(\Gamma) \ncong B^{\Gamma}_{\Omega}$ for any $\Omega$.
\end{proof}

Suppose $(\epsilon_{a,b})$ is a collection of orientation coef\/f\/icients. For $x \in V$, def\/ine $V_{x}$ and $y_x$ as in Proposition \ref{basis}. Notice that if we modify the set $J$ in \eqref{J} by setting
\begin{gather*}
 J' \coloneqq \{[a], [b \,|\, c], \epsilon_{x,y_x}[x \,|\, y_x \,|\, x] \,|\, a,x \in V,\text{ } b,c \in V \text{ such that } \{b,c\} \in E \},
\end{gather*}
then $J'$ is independent of the choice of $y_x$ for every $x \in V$. Indeed, for any $y,z \in V_{x}$, we have
\begin{gather*}
 \epsilon_{x,y}[x \,|\, y \,|\, x]=\epsilon_{x,z}[x \,|\, z \,|\, x].
\end{gather*}
In \cite[Section~6.1, p.~2516]{MR2988902}, the authors f\/ix an orientation $\epsilon$ of $\Gamma$ and def\/ine coef\/f\/icients $\epsilon_{a,b}$ $(a,b \in V)$ as in \eqref{ep orientation}. They then def\/ine a diagrammatic algebra using these coef\/f\/icients. Their algebra is in fact isomorphic to the algebra $B_{\Omega}^{\Gamma}$, for $\Omega = (\epsilon_{a,b} \,|\, a,b \in V)$, via the map
\begin{gather*}
 A \to A(\Gamma), \qquad
 \begin{tikzpicture}[baseline={([yshift=-.5ex]current bounding box.center)},>=stealth',auto,node distance=3cm, thick,main node/.style={circle,draw,font=\sffamily\Large\bfseries}]
 \draw[thick,->] (0,0) -- (0,2);
 \filldraw (0,1) circle (2pt);
 \draw (0,2) node [anchor=south] {$y$};
 \draw (0,0) node [anchor=north] {$x$};
 \end{tikzpicture}
 \mapsto
 \begin{cases}
 [x \,|\, y] & \text{$x\neq y$,} \\
 \epsilon_{x,y_x}[x \,|\, y_x \,|\, x] & \text{if $x=y$},
 \end{cases}
 \qquad
 \begin{tikzpicture}[baseline={([yshift=-.5ex]current bounding box.center)},>=stealth',auto,node distance=3cm, thick,main node/.style={circle,draw,font=\sffamily\Large\bfseries}]
 \draw[thick,->] (0,0) -- (0,2);
 \draw (0,1) node [anchor=west] {$x$};
 \end{tikzpicture}
 \mapsto [x].
\end{gather*}
\details{Let $\phi$ be the given map, which is clearly bijective. Furthermore, we have
 \begin{gather*}
 [x \,|\, y][y \,|\, z]=[x \,|\, y \,|\, z]=0,
 \end{gather*}
 if $x \neq z$. If $x=z$, we have
 \begin{gather*}
 [x \,|\, y][y \,|\, z]=[x \,|\, y \,|\, x]=v_{y,y_x}^x[x \,|\, y_x \,|\, x]=\frac{\epsilon_{x,y_x}}{\epsilon_{x,y}}[x \,|\, y_x \,|\, x].
 \end{gather*}
 Since $\epsilon_{x,y}= \pm 1$ for all $x,y \in V$, we have $\epsilon_{x,y}=\epsilon_{x,y}^{-1}$. Thus,
 \begin{gather*}
 [x \,|\, y][y \,|\, z]=\epsilon_{x,y}\epsilon_{x,y_x}[x \,|\, y_x \,|\, x].
 \end{gather*}
 Therefore, we have
 \begin{gather*}
 [x \,|\, y][y \,|\, z]= \delta_{xz} \epsilon_{xy} \epsilon_{x,y_x}[x \,|\, y_x \,|\, x].
 \end{gather*}
Consequently, $\phi$ is an algebra homomorphism.}
So, by Lemma~\ref{ep to u}, $A$ is isomorphic to $A_v(\Gamma)$ where $v=\big(v^a_{b,c}= \frac{\epsilon_{a,c}}{\epsilon_{a,b}} \,|\, \{a,b\},\{a,c\} \in E\big)$. However, if $\Gamma$ is not bipartite, then this is not isomorphic to the zigzag algebra by Proposition~\ref{not iso}.

\textbf{Note on the \LaTeX version.} For the interested reader, the tex f\/ile %of the arXiv version
of this paper includes hidden details of some straightforward computations and arguments that are omitted in the pdf f\/ile. These details can be displayed by switching the \texttt{details} toggle to true in the tex f\/ile and recompiling.

\subsection*{Acknowledgements}
 This work was completed under the supervision of Professor Alistair Savage. The author would like to thank Professor Savage immensely for his patience and guidance throughout this paper as well as the opportunity to write this paper. The author would also like to thank the University of Ottawa and the Work-Study Program for their support. Finally, the author would like to thank the referees for their useful comments and for providing a~reference for Proposition~\ref{symmetric algebra}.

\pdfbookmark[1]{References}{ref}
\LastPageEnding

\end{document}